\newcommand {\nc} {\newcommand}
\newcommand {\enm} {\ensuremath}
\nc {\bdm} {\begin{displaymath}}
\nc {\edm} {\end{displaymath}}
\newtheorem {theorem} {\bf{Theorem}}[section]
\newtheorem {lemma}[theorem] {\bf Lemma}
\newtheorem {proposition}[theorem] {\bf Proposition}
\newtheorem {corollary}[theorem] {\bf Corollary}
\numberwithin {equation}{section}
\newcommand{\Ou}{\enm{\mathcal{O}}}
\nc{\J}{\enm{\mathcal{J} }}
\nc {\Z} {\enm{\mathbb{Z}}}
\nc {\form}[1] {\enm{\mbox{\underline{for}}}_{#1}}
\nc {\prol}[1] {\enm{\mbox{\underline{prol}}_{{#1}^*}}}
\newcommand{\Del}{\partial}
\nc {\stk} {\stackrel}
\newcommand{\map}{\rightarrow}
\newcommand{\beqar}{\begin{eqnarray*}}
\newcommand{\eeqar}{\end{eqnarray*}}
\newcommand{\inj}{\hookrightarrow}
\newcommand{\Pn}[2] {\ensuremath{ {\mathbb{P}}^{#1}_{#2}}}
\nc{\Quot}[3]{\enm{ {\mathfrak{Quot}_{ {#1}/{#2}/{#3}}}}}
\nc{\Hilb}[2]{\enm{ {\mathfrak{Hilb}_{ {#1}/{#2}}}}}
\newcommand{\bb}[1]{\mathbb{#1}}
\newcommand{\mcal}[1]{\mathcal{#1}}
\nc {\Coh}[4] {\ensuremath{H^{#1}(\Pn{#2}{},{#3}({#4}))}}
\nc {\Ch}[3] {\enm{H^{#1}(X_t,{#2}_t({#3}))}}
\nc {\Qphi}[4]{\enm{ {\mathfrak{Quot}^{~#4}_{ {#1}/{#2}/{#3}}}}}
\nc {\Gra}[4]{\enm{ {\mathfrak{Grass}_{#2}({#3},{#4})}}}
\nc {\HomA}[2]{\enm{\mathrm{Hom}_A{#1}{#2}}}
\nc {\tr}{\mathrm{tr}}
\nc {\C}[2]{\enm{\left(\begin{array}{l} {#1} \\ {#2} \end{array} \right)}}
\nc {\mat}[4]{\enm{\left(\begin{array}{ll}{#1} & {#2} \\ {#3} & {#4}
\end{array}\right)}}
\def \mb{\mbox}
 \def \Z{{\mathbb Z}}
   \def \h{\hat{\ }}
  \def \bX{{\bf X}} \def \bH{{\bf H}}
\def \hG{\hat{\mathbb{G}}_{\mathrm{a}}} 
\def \Ga{\mathbb{G}_{\mathrm{a}}}
\def \R1{R((q))[q']\h}
\DeclareMathOperator{\Spec}{\mathrm{Spec}}
\DeclareMathOperator{\rk}{\mathrm{rk}}
\newcommand{\Hom}{\mathrm{Hom}}
\newcommand{\Ext}{\mathrm{Ext}}
\newcommand{\bI}{{\bf I}}
\newcommand{\oldmarginpar}[1]{}
\newcommand{\cy}{}
\nc{\bx}{\bold{x}}
\nc{\by}{\bold{y}}
\nc{\bz}{\bold{z}}
\nc{\ba}{\bold{a}}
\nc{\Fd}{F^\dagger}
\nc{\Rd}{R^\dagger}
\nc{\mlow}{m_{\mathrm{l}}}
\nc{\mup}{m_{\mathrm{u}}}
\nc{\ord}{\mb{ord }}
\nc{\bXp}{\bX_{\mathrm{prim}}}
\nc{\bPsi}{\bold{\Psi}}
\nc{\mult}{\mathrm{mult}}
\nc{\mbB}{\mathbbm{B}}
\nc{\mfor}[1]{{#1}^{\mathrm{for}}}
\nc{\latder}{\tilde{\partial}}
\nc{\bg}{{\bold h}}
\nc{\bff}{{\bold f}}
\nc{\bu}{{\bold u}}
\nc{\tTheta}{\tilde{\Theta}}
\nc{\Ker}{\text{Ker}}
\nc{\tlu}{\tilde{u}}
\nc{\cblue}{\color{black}}
\nc{\cblack}{\color{black}}
\nc{\cblu}[1]{\color{black}{#1} \color{black}}
\nc{\cment}[1]{}
\nc{\Ha}{\bb{A}^g}
\nc{\bHnew}{\tilde{\bH}}
\title{Differential Characters and $D$-Group Schemes}
\author{Rajat Kumar Mishra and Arnab Saha}
\date{}
\email{rajat.m@iitgn.ac.in, arnab.saha@iitgn.ac.in}
\address{Indian Institute of Technology Gandhinagar, Gujarat 382355, India}
\subjclass[2010]{Primary 14L15, 14L40, 14K99}
\keywords{differential algebra, differential characters, group schemes, 
jet spaces}
\begin{document}
\maketitle

\begin{abstract}
\cblue
Let $K$ be a field of characteristic zero with a fixed derivation $\partial$ 
on it.
In the case when $A$ is an abelian scheme, Buium considered the 
group scheme $K(A)$ which is the kernel of differential characters (also known
as Manin characters) on the 
jet space of $A$. Then $K(A)$ naturally inherits a $D$-group scheme structure.
Using the
theory of universal vectorial extensions of $A$, he further showed that $K(A)$ 
is a finite dimensional vectorial extension of $A$.

Let $G$ be a smooth connected commutative finite dimensional group 
scheme over $\Spec K$. 
In this paper, using the  theory of differential characters, 
we show that the associated kernel group scheme $K(G)$ is a finite 
dimensional $D$-group scheme 
that is a vectorial extension of such a general $G$.

Our proof relies entirely on understanding the structure of jet spaces. 
Our method also allows us to
give a classification of the module of differential characters 
$\mathbf{X}_\infty(G)$  in terms of primitive characters as a 
$K\{\partial\}$-module.

\cblack



\end{abstract}

\color{black}
\section{Introduction}
The theory of differential algebra was initiated by Ritt \cite{Ri} to study the geometry of algebraic differential equations. Since then the theory has seen 
remarkable developments \cite{Bert}\cite{B0}\cite{Cassidy}\cite{Hru}\cite{K1} 
and applications in diophantine geometry, some of which may be found in 
\cite{B4}\cite{BuPi}\cite{BV1}\cite{Hru}\cite{Manin}\cite{V1}. 
Buium developed several aspects of the theory for curves, abelian varieties and their moduli spaces in a series of papers \cite{B1}
\cite{B4}\cite{B8}. 

Differential characters play a central role in this study and its applications to diophantine geometry. 
\cblue 
Let $K$ be a field of characteristic zero with a fixed derivation $\partial$ 
on it.
In the case when $A$ is an abelian scheme over $\Spec K$, Buium considered the 
kernel group scheme $K(A)$ of differential characters on $A$.
By definition, $K(A)$ is a closed subscheme of the infinite jet space 
$J^\infty A$ and naturally inherits a $D$-group scheme
structure by restricting the one on $J^\infty A$. 
Note that $J^\infty A$ is an infinite dimensional $D$-group scheme over 
$\Spec K$. In Section $2.1$ of \cite{B4} or, Section $5$ of \cite{Bu1}, 
Buium used the theory of universal vectorial extensions of $A$ to 
show that $K(A)$ is a finite dimensional vectorial extension of $A$.


In this paper, for any smooth connected commutative finite dimensional group
scheme $G$ over $\Spec K$, using the theory of jet schemes, we 
show that the associated kernel group scheme $K(G)$ is a finite dimensional
$D$-group scheme and  is a vectorial
extension of $G$; the precise structure of $K(G)$ is given in our Theorem 
\ref{K(G)}.

The module of differential characters (also known as Manin characters) of $G$, denoted by $\mathbf{X}_\infty(G)$, naturally has a $K\{\partial\}$-action on it, where $K\{\partial\}$ is the non-commutative ring generated by the derivative operator $\partial$ over $K$. One then naturally considers the subclass of differential characters of the group scheme which are not obtained by any $K$-linear combination of compositions of the operator $\partial$ on lower order characters. The elements of this subclass of differential characters are called primitive characters.

In this paper, we prove that $\mathbf{X}_\infty(G)$ is finitely
generated over $K\{\partial\}$ by $g$ primitive differential characters where $g$ is the relative dimension of $G$ over $\Spec K$. We also give an upper bound on the order of the primitive 
characters generating the module of differential characters over 
$K\{\partial\}$ in terms of the dimension of the Ext group of $G$. The 
precise statement of the above discussion is our Theorem \ref{mt}.
The analogous results in the case of arithmetic jet spaces are 
considered in \cite{BoSa1}\cite{BoSa2}\cite{Bu2}\cite{Bu-Mi-1}\cite{Bu-Mi-2}\cite{PS1}.
\cblack


Now we explain our results in greater detail. Let $X$ be a scheme over 
$\Spec K$. 
Then the system of jet spaces $J^*X=\{J^nX\}_{n=0}^\infty$ with respect to $(K,\partial)$ forms an inverse system of schemes, called the canonical prolongation sequence. This system satisfies a universal property with respect to lifting the fixed derivation $\partial$ on $K$. The ring of global functions $\Ou(J^nX)$ for
each $n$ is the ring of the $n$-th order differential functions on $X$.

For any $K$-algebra $B$, let  $D_n(B):=\frac{B[\epsilon]}{(\epsilon^{n+1})}$ be the ring of truncated polynomials of length $n+1$ with coefficients in $B$. 
\cblu{
Composing the structure map along with the canonical Hasse-Schmidt 
derivation $\exp_\partial: K \map D_n(K)$, $D_n(B)$ naturally becomes a 
$K$-algebra. 
Note that when $\partial$ is a non-trivial
operator on $K$, then the $K$-algebra structure on $D_n(B)$ is different from
the $K$-algebra action on $D_n(B)$ given by scalar multipication. For 
every $n$, there is a natural projection map of $K$-algebras $T:D_{n+1}(B)
\map D_n(B)$ given by 
$T(b_0 + \cdots + b_{n+1}\epsilon^{n+1}) = b_0 + \cdots +b_n \epsilon^n.$
}

For any $K$-algebra $B$ the $n$-th jet space functor of $X$, denoted 
$J^nX$, is defined to be  
$$J^nX(B)=X(D_n(B)).$$

Then the functor $J^nX$ is representable by a scheme over $\Spec K$
\cite{Bu1}(Chapter 3, Proposition $3.16$). The 
$K$-algebra morphism $T$ induces a canonical map of functors $u:J^{n+1}X \map
J^nX$ for all $n$. Also, there exists a canonical derivation on sheaves
$\partial : \Ou_{J^{n-1}X} \map u_* \Ou_{J^nX}$.
A scheme $X$ over $\Spec K$ is called a {\it $D$-scheme} if there exists a 
derivation $\partial : \Ou_X \map \Ou_X$ on the structure sheaf of $X$.
By a {\it $D$-group scheme $G$} over $\Spec K$, we will understand the scheme $G$ 
endowed with a $D$-scheme structure that is also compatible with the group 
scheme structure of $G$.

Let $G$ be a smooth commutative group scheme of relative dimension $g$ over 
$\Spec K$, then $J^nG$ has relative dimension $g(n+1)$ over $\Spec K$. 
Let $\bX_n(G)$ be the set of group scheme morphisms from $J^nG$ to $\bb{G}_a$.
Such additive characters were used by Manin to prove the Lang-Mordell conjecture
for abelian varieties over function fields \cite{Manin}. Later Buium gave 
a different proof of the result, also using Manin characters \cite{B4}.

Since $\bb{G}_a$ has a $K$-linear structure on it, $\bX_n(G)$ naturally is a 
$K$-vector space. For every $n$, the pull-back of the morphism $u:J^{n+1}G
\map J^nG$, denoted by $u^*$, induces a $K$-linear map of vector spaces 
$u^*: \bX_n (G) \map \bX_{n+1}(G)$. Set 
$$\bX_\infty(G):= \varinjlim_{u^*} \bX_n(G).$$
\cblu{The derivation $\partial$ naturally makes $\bX_\infty(G)$ into a 
$K\{\partial\}$-module (for more details see Section $2.2$) .
}


We call a differential character $\Theta$ of order $n$ to be {\it primitive}
 if $\Theta\notin u^*\bX_{n-1}(G)+\partial\bX_{n-1}(G)$. 
The definition of such primitive characters means that $\Theta$ is not 
obtained by any combination of pullback by $u$ and applying derivatives to any
lower order characters. In Corollary \ref{xprim}, we will show that the 
space of such primitive characters (definition as in Section \ref{primchar}) is 
of dimension $g$. Also we consider a quantity $\mup$, called the {\it upper 
splitting number} which is defined in Section \ref{primchar}.

In \cite{Bu1}, Chapter 5, Theorem $1.1$, Buium shows that in the case when $A$ 
is an abelian variety over 
$\Spec K$ (hence $\dim_K \Ext (A, \bb{G}_a) =g$), $\bX_\infty(A)$ is
 generated by $\bX_{g+1}(A)$ as a $K\{\partial\}$-module. He shows the above by 
 using the theory of universal extensions of abelian varieties.
The following is our first
main result in the general case of a  smooth connected commutative group scheme 
$G$ over $\Spec K$. We remark that we use purely differential algebraic 
methods to prove our statement.

\begin{theorem}
    \label{mt}
    Let $G$ be a smooth connected commutative group scheme of relative dimension $g$ over $\Spec K$ and $r= \dim_K \Ext(G,\Ga)$. Then 
$\bX_{\infty}(G)$ is generated by $g$ primitive 
characters of order at most $r+1$ as a $K\{\partial\}$-module.  
In other words, $m_u \leq r+1$.  \\
\end{theorem}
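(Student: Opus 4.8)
The plan is to reduce the statement about differential characters of $G$ to a statement about the structure of its jet spaces $J^n G$, exploiting that $J^nG$ is an iterated extension of $G$ by affine spaces. First I would record the basic exact sequences: for each $n$ the projection $u:J^{n+1}G \to J^nG$ has kernel a vector group $\Ga^g$ (since $J^{n+1}G$ has relative dimension $g(n+2)$ and $u$ is smooth surjective with the fibre over the identity being $\ker(D_{n+1}(B)\to D_n(B))$-points, i.e.\ $\epsilon^{n+1}$-torsion), so there is a short exact sequence of commutative group schemes $0 \to \Ga^g \to J^{n+1}G \xrightarrow{u} J^nG \to 0$. Applying $\Hom_{\mathrm{gp}}(-,\Ga)$ and its derived functors gives a long exact sequence relating $\bX_{n+1}(G)$, $\bX_n(G)$, $\Hom(\Ga^g,\Ga)=K^g$ (using $\mathrm{char}\,K=0$, so $\Hom(\Ga,\Ga)=K$) and $\Ext(J^nG,\Ga)$. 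The key cohomological input will be to control $\Ext(J^nG,\Ga)$: I expect that $\dim_K\Ext(J^nG,\Ga)$ stabilizes, and in fact that $\Ext(J^nG,\Ga)\to\Ext(J^{n+1}G,\Ga)$ is eventually an isomorphism, because the affine-space kernels contribute nothing to $\Ext$ in characteristic zero ($\Ext(\Ga^g,\Ga)=0$) — this is exactly where $r=\dim_K\Ext(G,\Ga)$ enters and why the bound involves $r+1$ rather than $g$.

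Concretely, the connecting maps in the long exact sequence show that $\operatorname{coker}\bigl(u^*:\bX_n(G)\to\bX_{n+1}(G)\bigr)$ injects into $\Hom(\Ga^g,\Ga)\cong K^g$, so $\dim_K \bX_{n+1}(G) - \dim_K\bX_n(G) \le g$, with equality precisely when the connecting map $K^g \to \Ext(J^nG,\Ga)$ vanishes. The strategy is then: (i) show $\dim_K \bX_n(G)$ grows by at most $g$ at each step and grows by exactly $g$ for all $n$ large enough, the transition happening once $\Ext(J^nG,\Ga)$ has stabilized; (ii) pin the stabilization index down using $r$ — each jet level can "absorb" at most one dimension's worth of $\Ext$, so after at most $r$ steps the $\Ext$ contribution is exhausted and from level $\approx r$ onward every new character, modulo $u^*$, is genuinely $g$-dimensional; (iii) identify these new directions with the $\partial$-images: the derivation $\partial$ on $\bX_\infty(G)$ raises order by exactly one, and the composite $\bX_n(G)\xrightarrow{\partial}\bX_{n+1}(G)\to \operatorname{coker}(u^*)$ should be shown to be surjective once we are past the $\Ext$-obstruction, so that $\bX_{n+1}(G) = u^*\bX_n(G) + \partial\bX_n(G)$ for $n\ge r+1$; hence no primitive characters occur in order $>r+1$. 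Combined with Corollary \ref{xprim} (the primitive space has total dimension $g$), this yields generation by $g$ primitive characters of order at most $r+1$, i.e.\ $\mlow$... \emph{pardon}, $\mup\le r+1$.

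The main obstacle I anticipate is step (iii): it is not a priori obvious that the "new" character directions detected by the $\Ext$-theoretic dimension count coincide with $\partial$ of lower-order characters. One has to show that $\partial$ acts "efficiently" — that applying $\partial$ to a full-dimensional-growth batch of characters at level $n$ already surjects onto the $g$-dimensional cokernel at level $n+1$. I would handle this by analyzing $\partial$ explicitly on the jet-space coordinates: on $\Ou(J^{n+1}G)$ the derivation sends the degree-$n$ jet coordinates onto (generators of) the degree-$(n+1)$ ones, so pulling back a character $\Theta$ of order $n$ to $J^{n+1}G$ and differentiating produces a character whose "leading term" in the $\epsilon^{n+1}$-slot is nonzero and in fact spans the relevant $K^g$ as $\Theta$ ranges over an appropriate complement. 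A secondary subtlety is making the long-exact-sequence argument genuinely functorial in $n$ and compatible with the maps $u^*$, so that the colimit $\bX_\infty(G)$ inherits the expected finite generation; but this is bookkeeping once the per-level dimension count and the surjectivity of $\partial$ onto cokernels are in hand.
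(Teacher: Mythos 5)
Your overall architecture matches the paper's: exact sequences coming from the kernels of the jet projections, a $\Hom$/$\Ext$ long exact sequence to bound the growth of $\dim_K\bX_n(G)$, and an explicit leading-term analysis of $\partial$ in jet coordinates to show that $\partial$ accounts "efficiently" for the new characters at each level. (The paper works with the absolute sequence $0\to N^nG\to J^nG\to G\to 0$, so the receiving $\Ext$ group is the fixed space $\Ext(G,\Ga)$ and one tracks the increasing images $\bI_n\subseteq\Ext(G,\Ga)$ with increments $h_n=\dim\bI_n-\dim\bI_{n-1}$; your relative sequence $0\to H^{n+1}G\to J^{n+1}G\to J^nG\to 0$ with the varying groups $\Ext(J^nG,\Ga)$ is an equivalent but slightly less convenient bookkeeping device.)

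There is, however, a genuine gap in your step (ii). The assertion that "each jet level can absorb at most one dimension's worth of $\Ext$" is false: for an abelian variety $A$ of dimension $g$ one has $\bX_0(A)=\bX_1(A)=0$ while $\bX_0(N^1A)\cong K^g$, so the first level already absorbs $g=r$ dimensions. More importantly, the bound $\mup\le r+1$ does not follow merely from the total absorption being at most $r$: a priori the at most $r$ levels with nonzero absorption could sit at arbitrarily high order. What is actually needed is the monotonicity statement that once the connecting map vanishes at some level it vanishes at every later level (equivalently, that $h_n$ is weakly decreasing), and this is \emph{not} a purely cohomological fact. In the paper it falls out of the identity $l_n=h_{n-1}-h_n$ together with $l_n\ge 0$, and that identity is proved using exactly the linear-independence-of-leading-terms machinery ($L(\partial^s\Theta)=A_n\Del^{n+s}(\bx)$, hence the sets $S_n(\mathbbm{B}_i)$ are independent and $\dim_K\bX_n(G)=(n+1)l_0+nl_1+\cdots+l_n$) that you defer to step (iii). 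So your steps (ii) and (iii) cannot be decoupled in the order you propose: you must first establish the leading-term control of $\partial$ and the resulting dimension formula, and only then does the $\Ext$ budget $\sum_n h_n\le r$, combined with $h_n$ decreasing, force $h_n=0$ and hence $l_{n+1}=0$ for all $n\ge r+1$. With that reordering, and with the telescoping identity $l_0+\cdots+l_n=g-h_n$ giving the count of $g$ primitive generators, your outline completes to the paper's proof.
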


In Section $2.1$ of \cite{B4}, Buium shows that the kernel group scheme 
$K(A)$ of differential characters is a finite dimensional $D$-group scheme 
when $A$ is an abelian scheme over $\Spec K$. His construction again uses the 
theory of universal extensions of abelian schemes.
Our next main result, using purely differential algebraic methods,
 is the generalization of the above result for a general
$G$ as follows:  

\begin{theorem}
\label{K(G)}
Let $G$ be as in Theorem \ref{mt}.
There exists a canonical finite dimensional $D$-group scheme $K(G)$ (constructed in Section \ref{Kernel-Scheme}) 
which is a vectorial extension of $G$. In particular $K(G)$ satisfies the
following short exact sequence of group schemes over $\Spec K$ 
$$
0 \longrightarrow L(G) \stk{\iota}{\longrightarrow} K(G) 
\longrightarrow G \longrightarrow 0
$$
where $L(G)$ is a vector group of relative dimension $(\mup-1)g$ over $S$, with 
$\mup$ denoting the upper splitting number of $G$ (defined in Section $3$).
\end{theorem}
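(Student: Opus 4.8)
The plan is to construct $K(G)$ directly inside the infinite jet space $J^\infty G$ as the scheme-theoretic intersection of the kernels of all differential characters, and then to show that this intersection is already cut out at finite level thanks to Theorem \ref{mt}. Concretely, I would set $K(G) := \bigcap_{\Theta \in \bX_\infty(G)} \ker \Theta$, where each $\Theta$ of order $n$ is viewed as a morphism $J^n G \to \Ga$ and we pull back to $J^\infty G$. Since every element of $\bX_\infty(G)$ is a $K\{\partial\}$-combination of the $g$ primitive generators $\Theta_1,\dots,\Theta_g$ of order at most $r+1$, and since the derivation $\partial$ on $\sO_{J^\infty G}$ preserves any closed $D$-subscheme, the intersection over \emph{all} characters equals the intersection over the finitely many generators together with their $\partial$-translates — and the latter is automatically the largest $D$-subscheme contained in $\bigcap_{i=1}^g \ker \Theta_i$. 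This shows $K(G)$ is a closed $D$-subgroup scheme of $J^\infty G$, inheriting the group law and the derivation; the content is that it is \emph{finite dimensional}, i.e.\ lives as a closed subscheme of $J^m G$ for some finite $m$, which I expect to follow from the order bound $\mup - 1 \le r$ on the primitive characters together with a dimension count on how many independent conditions the $\partial^j \Theta_i$ impose at each level.

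Next I would establish the short exact sequence. The projection $K(G) \to G$ is the restriction of the canonical projection $J^\infty G \to J^0 G = G$; I need this to be surjective (faithfully flat) with the asserted kernel. Surjectivity should come from the fact that the jet projections $u : J^{n+1}G \to J^n G$ are surjective with vector-group kernels (of relative dimension $g$), so a compatible system of lifts exists, and the defining characters vanish on $G \subset J^\infty G$ embedded via the canonical section of the $D$-scheme structure on $G$ induced by $\partial$ — wait, more carefully: $G$ need not carry a $\partial$-structure, so instead surjectivity of $K(G)\to G$ is the statement that over each point of $G$ the fiber of $K(G)$ is nonempty, which reduces to solvability of the linear system given by the primitive characters on the successive vector-group kernels of the $u$'s. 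The kernel $L(G)$ is then $K(G) \cap \ker(J^\infty G \to G)$, and since $\ker(J^m G \to G)$ is a vector group for each finite $m$, $L(G)$ is a sub-vector-group; its relative dimension is computed by counting, level by level, the corank of the linear map induced by $(\partial^j \Theta_i)$ on $\ker(J^{n+1}G\to J^nG) \cong (\Lie G)^{\oplus g}$ — and the bookkeeping, controlled by the notion of upper splitting number $\mup$ introduced in Section 3, should yield exactly $(\mup - 1)g$. That $L(G)$ is a vector group (not merely an extension of vector groups) uses commutativity and that everything is happening inside the unipotent vector-group kernels of the jet tower.

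The main obstacle, I expect, is the finite-dimensionality together with the \emph{precise} dimension count $(\mup-1)g$ — i.e.\ showing not only that $K(G)$ stabilizes at finite level but that the conditions imposed by $\{\partial^j\Theta_i\}$ are "independent in the right way" so that no collapse or extra collapse occurs. This is where the structure of jet spaces from Section 2 and the definition of $\mup$ must be used in an essential way: one has to understand, for each primitive character $\Theta_i$ of order $n_i$, exactly which graded pieces of the filtration of $J^\infty G$ by the kernels of $J^N G \to J^n G$ it constrains, and show that applying $\partial$ shifts this constraint up by one level without redundancy until the splitting number is reached. A secondary technical point is checking that the resulting $K(G)$ is independent of choices (e.g.\ of the primitive generators) — this should follow from the intrinsic description as the common kernel of \emph{all} characters — and that the $D$-group-scheme axioms (compatibility of $\partial$ with comultiplication) are genuinely inherited, which is formal once $K(G)$ is known to be a $\partial$-stable subgroup of the $D$-group scheme $J^\infty G$.
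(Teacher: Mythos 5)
Your definition of $K(G)$ as the common kernel of \emph{all} differential characters is not the object the paper constructs, and it cannot satisfy the theorem as stated. The failure is already visible whenever $\bX_0(G)\neq 0$: any nonzero $\Psi\in\bX_0(G)=\Hom(G,\Ga)$ forces your $K(G)$ to lie over $\ker\Psi\subsetneq G$, so the projection $K(G)\to G$ is not surjective (take $G=\Ga\times A$ with $A$ an abelian variety; the first projection is an order-zero primitive character). Even when $\bX_0(G)=0$, a count using Corollary \ref{dimX} shows that the common kernel of all characters stabilizes to a scheme of dimension $\sum_{i=1}^{g}o_i$, where $o_1,\dots,o_g$ are the strict orders of the $g$ primitive generators, whereas the theorem needs $\dim K(G)=\mup\,g$; these agree only in the special case where every primitive character has order exactly $\mup$ (which does hold for a generic abelian variety, and is probably why the definition looks plausible).

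The paper's construction differs in one essential step that your proposal omits: before taking kernels, each primitive generator is raised to the \emph{common} top order $m=\mup$ by setting $\tilde{\Theta}_i=\partial^{m-o_i}\Theta_i$, and $K(G)$ is defined as the kernel of $[\tilde{\Theta}]_m=(\tilde{\Theta}_1,\dots,\tilde{\Theta}_g):J^mG\to\Ga^g$ (equivalently of the induced map of prolongation sequences $J^\infty G\to J^\infty\Ga^g$). This is the common kernel of the $K\{\partial\}$-submodule generated by the $\tilde{\Theta}_i$, which is strictly smaller than $\bX_\infty(G)$ whenever some $o_i<m$, so the resulting $K(G)$ is strictly larger than your intersection. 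The normalization is exactly what makes the argument close: the restriction of $[\tilde{\Theta}]_n$ to the top graded piece $H^nG=\ker(J^nG\to J^{n-1}G)\cong\Ga^g$ is the matrix $A$ of leading coefficients of the $\tilde{\Theta}_i$, invertible precisely because the $\Theta_i$ are primitive (Lemma \ref{prim_mat}); two applications of the snake lemma then simultaneously give the stabilization $K^{n+1}G\cong K^nG$ for $n\geq m$ (hence finite-dimensionality), the surjectivity of $K(G)\to G$, and the identification of the kernel with $\ker(N^mG\to\Ga^g)\cong\Ga^{(m-1)g}$. The ``level-by-level corank bookkeeping'' you defer to is this computation, but it only yields the uniform answer $(\mup-1)g$ after the normalization step; without it the coranks vary with $i$ and the intersection both drops dimension and loses surjectivity onto $G$.
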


\color{black}






\section{Jet spaces}
Here we recall some of the basic notions in differential algebra. For a more
detailed discussion, we refer to \cite{Bu1}.
\cy{Let $K$ be a field with a derivation $\partial$ on it and $S=\Spec K$.}
Consider $X$ and $Y$ be schemes over $S$. We say a pair $(u,\partial)$ is a 
{\it \cy{kernel}}, and write
$Y \stk{(u,\partial)}{\longrightarrow} X$, if $u: Y \map X$ is a map of 
schemes over $S$ and $\partial: \Ou_X \map u_*\Ou_Y$ is a
derivation of sheaves lifting the derivation on the field $K$.
\cblu{The above is equivalent to giving a morphism $Y \map J^1X$ of 
$S$-schemes, where $J^1X$ is the first jet space defined later in this section.}

\cy{By a {\it prolongation over $S$, } we mean two kernels of the form $$X_1\xrightarrow{(u_1,\partial_1)} X_0\xrightarrow{(u_0,\partial_0)} X_{-1}, $$ such that the following diagram commutes
 $$
 \xymatrix{
 \mathcal{O}_{X_{-1}}\ar[r]^{\partial_0}\ar[d]^{u_0} & {u_0}_{*}\Ou_{X_0}\ar[d]^{u_1}\\
 {u_0}_{*}\Ou_{X_0}\ar[r]^{\partial_1} & {u_0}_{*}{u_1}_{*}\Ou_{X_1},
 }
  $$ 
where $X_i$s are schemes over $S$ for $i=-1,0,1$.}
 A {\it prolongation sequence over $S$} is a sequence of prolongations
        $$
        \xymatrix{
        S & T^0 \ar_-{(u,\partial)}[l] & T^1 \ar_-{(u,\partial)}[l] &  T^2 \ar_-{(u,\partial)}[l] & 
\cdots\ar_-{(u,\partial)}[l] },
        $$
where each $T^n$ is a scheme over $S$. We will often use the notation $T^*$ or $\{T^n\}_{n \geq 0}$.

\cy{Let $S^*$ denote the prolongation sequence 
$$
        \xymatrix{
        S & S \ar_-{(id,\partial)}[l] & S \ar_-{(id,\partial)}[l] & 
\cdots\ar_-{(id,\partial)}[l]},
        $$}
where the derivation $\partial$ on $\Ou_S$ is induced from the fixed derivation
on $K$.
Prolongation sequences over $S^*$ form a category $\mcal{C}_{S^*}$, where a morphism $f: T^*\to U^*$ is
a family of morphisms $f^n: T^n\to U^n$ commuting with both the $u$ and 
$\partial$, in the evident sense.

\cblu{
For any $K$-algebra $B$, let  $D_n(B):=\frac{B[\epsilon]}{(\epsilon^{n+1})}$ be the ring of truncated polynomials of length $n+1$ with coefficients in $B$. 
Consider the canonical ring homomorphism, called the 
{\it Hasse-Schmidt derivation} $\exp_\partial: K \map D_n(K)$, which is
induced by $\partial$ and is given by
\begin{align}
\nonumber
\exp_\partial(r)=  r+ \frac{\partial r}{1!} \epsilon + \cdots + 
\frac{\partial^n r}{n!} \epsilon^n,
\end{align}
for all $r \in K$. For any $K$-algebra $B$, $\exp_\partial$ canonically
makes $D_n(B)$ into a $K$-algebra. Note that when $\partial$ is a non-trivial
operator on $K$, then the $K$-algebra structure on $D_n(B)$ is different from
the $K$-algebra action on $D_n(B)$ given by scalar multipication.
For every $n$, consider $T: D_{n+1}(B) \map D_n(B)$ to be the $K$-algebra map
given by 
$$T(b_0 + \cdots + b_{n+1}\epsilon^{n+1}) = b_0 + \cdots +b_n \epsilon^n.$$
}

We define \cy{the $n$-th jet space functor of an $S$-scheme $X$ as
        $$
        J^nX (B) := \Hom_S(\Spec D_n(B),X),
        $$}
for any $K$-algebra $B$.
Then $J^nX$ is representable by an $S$-scheme \cite{Bu1}(Chapter 3, Proposition
$3.16$) and $J^*X:= \{J^nX \}_{n \geq 0}$ forms a prolongation sequence which is
called the {\it canonical prolongation sequence}. 

Here we will give an explicit description of $J^nX$. For details, the reader
may see Lemma $3.4$ in \cite{Bu1}.
Suppose $X = \Spec K[\bx]$ be an affine space where 
$\bx = \{x_i\}_{i \in \mcal{I}}$ is a collection of indeterminates indexed by 
$\mcal{I}$. 
Then $J^nX$ is given
by $J^nX \simeq \Spec K[\bx,\Del(\bx),\dots, \Del^n(\bx)]$ where for each $j$, 
$\Del^j(\bx)$ denotes the collection of new indeterminates 
$\{\Del^j(x_i)\}_{i \in \mcal{I}}$.
For each $n$, the morphism $u:J^nX \map J^{n-1}X$ is induced from
the natural inclusion map of $K$-algebras  
$$K[\bx,\Del(\bx),\dots, \Del^{n-1}(\bx)] \inj  K[\bx, \Del(\bx),\dots,\Del^n(\bx)].$$
The derivation $\partial : K[\bx,\Del(\bx),\dots,\Del^{n-1}(\bx)]
\map K[\bx,\Del(\bx),\dots, \Del^n(\bx)]$ is induced from setting 
$\partial(\Del^j( \bx)) = \Del^{j+1}(\bx)$  for all $j=0,\dots, n-1$.

Now suppose $X = \Spec A$ be an affine scheme over $S$ given by $A = 
K[\bx]/I$ where $I \subset K[\bx]$ an ideal.
Then $J^nX \simeq
\Spec J_nA$ where 
\begin{align}
J_n A = K[\bx,\Del(\bx),\dots ,\Del^n( \bx)]/(I, \partial I, \partial^2 I ,\dots
\partial^n I) 
\end{align}
and for any subset $L \subset K[\bx,\dots \Del^{n-1}(\bx)]$ we set
$$
\partial L := \{\partial f ~|~ f \in L\} \subset K[\bx,\dots, \Del^n(\bx)].
$$
Then the canonical map of $K$-algebras $J_{n-1}A \map J_nA$ induces the 
projection map of $S$-schemes $u:J^nX \map J^{n-1}X$.
On the other hand the natural derivation $\partial:J_{n-1}A \map J_nA$ 
induced from $\partial(\Del^i( \bx) = \Del^{i+1}(\bx)$ for all $i=0,\dots n-1$ gives
the derivation of sheaves $\partial:\Ou_{J^{n-1}X} \map u_*\Ou_{J^nX}$. 

When $X$ is a general scheme and suppose $\{U_i\}_{i\in I}$ is an open 
affine cover of $X$, then $\{J^nU_i\}_{i\in I}$ naturally forms an open 
cover for $J^nX$. The projection map $u$ canonically glues over the
affine open cover $\{J^nU_i\}_{i \in I}$ of $J^nX$ and so does the derivation
$\partial$. In this manner, the system of schemes $J^*X$ forms a prolongation 
sequence.

Also note that if $G$ is 
a group scheme, from the above functor of points definition,
 $J^nG$ is also naturally a group scheme for all $n$.
Then the  canonical prolongation sequence satisfies the following
universal property \cy{\cite{Bu1} (Page 70)}: for any $T^* \in \mcal{C}_{S^*}$ and $X$ a scheme over
$S^0$, we have
        $$
        \Hom_S(T^0,X) = \Hom_{\mcal{C}_{S^*}}(T^*, J^*X).
        $$

\subsection{Character groups}
Given a prolongation sequence $T^*$ we define its shift $T^{*+n}$ by
$(T^{*+n})^j:= T^{n+j}$ for all $j$, that is
        $$
        \Spec K \stk{(u,\partial)}{\longleftarrow} T^n 
\stk{(u,\partial)}{\longleftarrow} T^{n+1}\cdots. 
        $$
We define a {\it $\partial$-morphism of order $n$} from $X$ to $Y$ to be a
morphism $J^{*+n}X \map J^*Y$ of prolongation sequences. \cy{ Given a group scheme $G$ over $S$,
we define a {\it differential character $\Theta$ of order $n$}, 
to be a $\partial$-morphism of group schemes of order $n$ from $G$ to $\Ga$
where the map between the prolongations sequences are group scheme morphisms.
Then by the universal property satisfied by a canonical prolongation sequence, an order $n$ character is equivalent to a morphism
$\Theta: J^nG \map \Ga$ of group schemes over $S$. Set
        $$
        \bX_n(G)=\Hom(J^nG,\Ga)
        $$
to be the group of differential characters of order $n$ of $G$.
Note that $\bX_n(G)$ comes with a natural
$K$-vector space structure since $\Ga$ has a $K$-linear structure on it.} Also the inverse system
$J^{n+1}G \stk{u}{\map} J^nG$ defines a directed system
        $$
       \cdots \stk{u^*}{\map} \bX_n(G) \stk{u^*}{\map} \bX_{n+1}(G) \stk{u^*}{\map}\cdots,
        $$
via pull back by $u^*$. Note that $u^*$ is injective.
We will say $\Theta$ is of {\it strict order} $n$ if $\Theta \in
\bX_n(G) \backslash u^*\bX_{n-1}(G)$.
We define $\bX_\infty(G):=\varinjlim \bX_n(G)$ to be the module of 
differential characters of $G$.

\subsection{The comultiplication map} 
\label{comult}

For any morphism of $S$-schemes $f:X \map Y$, let $f^*:\Ou_Y \map f_*\Ou_X$
denote the corresponding map of sheaves. Here for brevity, we will write
the map of sheaves as $f^*:\Ou_Y \map \Ou_X$ which is an abuse of notation but 
will help us to keep the use of symbols down in this section.

For any group scheme, let $m^*_G: \Ou_G \map \Ou_G \otimes \Ou_G$ denote the 
comultiplication map of sheaves associated to the group structure of $G$.
For a kernel $(u,\partial):H \map L$ of group schemes, we will say 
$\partial$ is compatible with the group structures, if the following diagram
of sheaves is commutative
$$\xymatrix{
\Ou_H \ar[r]^-{m^*_H} & \Ou_H \otimes \Ou_H \\
\Ou_L \ar[r]^-{m^*_L} \ar[u]^\partial & \Ou_L \otimes \Ou_L \ar[u]_{\partial 
\otimes u^* + u^* \otimes \partial}.
}$$
Hence for any $y \in \Ou_L$, we have
\begin{align}
\label{kk}
m^*_H(\partial y) = (\partial \otimes u^* + u^* \otimes \partial)(m^*_L(y)).
\end{align}

Given an $S$-group scheme $G$, for each $n$  we have the following short exact 
sequence of $S$-schemes
\begin{align}
0\map N^nG \stk{\iota}{\map} J^nG \stk{u}{\map} G \map 0,
\end{align}
where $N^nG$ is the kernel of the projection map $u:J^nG\rightarrow G$. For each $n$, consider $m^*_{N^nG}: \Ou_{N^nG} \map \Ou_{N^nG} \otimes \Ou_{N^nG}$ 
the comultiplication map induced from subgroup scheme structure of $N^nG$
inside $J^nG$.

Let $\bX_0(T)= \Hom(T, \Ga)$ denote the $K$-vector space of additive characters
of any group scheme $T$.
The following is a standard result.
\begin{lemma}
\label{compo}
Let $(u,\partial):H \map L$ be a prolongation of $S$-group schemes, such that
$\partial$ is compatible with the group structures. Then $\partial \bX_0(L) 
\subseteq \bX_0(H)$.
\end{lemma}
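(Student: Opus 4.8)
The plan is to reduce the statement to the defining identity \eqref{kk} together with the standard characterization of additive characters as primitive global functions. First I would recall that for any $S$-group scheme $T$, a morphism of group schemes $T \map \Ga$ is the same datum as a global section $f \in \Ou_T$ satisfying $m^*_T(f) = f \otimes 1 + 1 \otimes f$: a morphism of $S$-schemes $T \map \Ga$ is a choice of $f \in \Ou_T$, and compatibility with the comultiplications of $T$ and $\Ga$ is exactly the displayed identity, while compatibility with counits and antipodes is then automatic (applying $\epsilon_T \otimes \mathrm{id}$ to the identity forces $\epsilon_T(f) = 0$). So I will freely identify $\bX_0(T)$ with the space of such primitive functions.

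Next, fix $y \in \bX_0(L)$, so that $m^*_L(y) = y \otimes 1 + 1 \otimes y$. Since $(u,\partial)\colon H \map L$ is a prolongation of group schemes with $\partial$ compatible with the group structures, equation \eqref{kk} applies and gives
\[
m^*_H(\partial y) = (\partial \otimes u^* + u^* \otimes \partial)\big(m^*_L(y)\big) = (\partial \otimes u^* + u^* \otimes \partial)(y \otimes 1 + 1 \otimes y).
\]
I would then expand the right-hand side term by term, using that $u^*$ is a ring homomorphism (so $u^*(1) = 1$) and that $\partial$ is a derivation lifting the one on $K$ (so $\partial(1) = 0$): the summand $\partial \otimes u^*$ applied to $y \otimes 1$ gives $\partial y \otimes 1$ and applied to $1 \otimes y$ gives $0$, while $u^* \otimes \partial$ applied to $y \otimes 1$ gives $0$ and applied to $1 \otimes y$ gives $1 \otimes \partial y$. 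Hence $m^*_H(\partial y) = \partial y \otimes 1 + 1 \otimes \partial y$, which is precisely the condition that $\partial y$ be an additive character of $H$; that is, $\partial y \in \bX_0(H)$, and since $y$ was arbitrary, $\partial\bX_0(L) \subseteq \bX_0(H)$.

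There is essentially no serious obstacle here: the content is entirely in the commutative square defining compatibility of $\partial$ with the group structures, which is assumed, and its consequence \eqref{kk}. The only points requiring a little care are bookkeeping ones, namely keeping track that $\partial y$ lives in $\Ou_H$ via the map $\partial\colon \Ou_L \map u_*\Ou_H$ (consistent with the notational convention adopted at the start of Section~\ref{comult}), and applying the identification of $\bX_0$ with primitive functions on both $L$ and $H$. If one prefers to avoid the function-versus-character dictionary, the same computation can be carried out verbatim at the level of the sheaf maps $\Ou_H \to \Ou_H \otimes \Ou_H$, since \eqref{kk} is itself such a statement.
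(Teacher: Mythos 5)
Your proof is correct and follows essentially the same route as the paper: identify an additive character with a primitive global function, apply the compatibility identity \eqref{kk} to it, and expand $(\partial \otimes u^* + u^* \otimes \partial)(y\otimes 1 + 1 \otimes y)$ using $u^*(1)=1$ and $\partial(1)=0$ to see that $\partial y$ is again primitive. The only difference is that you carry out this expansion explicitly where the paper leaves it as "easy to see," which is a welcome bit of extra detail rather than a divergence in method.
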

\begin{proof}
Let $\Ga = \Spec K[x]$. Then any $f \in \bX_0(L)$ satisfies the following
commutative diagram of sheaves
$$\xymatrix{
\Ou_L \otimes \Ou_L & & \Ou_{\Ga} \otimes \Ou_{\Ga}
 \ar[ll]_{ (f^* \otimes f^*)} \\
\Ou_L \ar[u]^{m^*_L} & & \Ou_{\Ga}, \ar[ll]^{f^*} \ar[u]_{m^*_{\Ga}}
}$$
where if $x$ is a global coordinate function of $\Ga$, $m^*_{\Ga}$ on 
$\Ou_{\Ga}$ is induced by $m_{\Ga}^*(x)= x 
\otimes 1 + 1 \otimes x$. Then from the above diagram, we get 
$$m^*_L(f^*(x)) = f^*(x) \otimes 1 + 1 \otimes f^*(x).$$

By applying (\ref{kk}), it is easy to see that $\partial f^*$
satisfies
$$ m^*_H(\partial f^* (x)) = (\partial f^* \otimes \partial f^*)(m^*_{\Ga}(x)),$$
which shows that $\partial f \in \bX_0(H)$.
\end{proof}

\cblu{
Since $\bb{G}_a$ has a $K$-linear structure on it, $\bX_n(G)$ naturally is a 
$K$-vector space. 
For any $\Theta \in \bX_{n}(G)$, by the above discussion 
$\partial \Theta$ is a differential 
character of order $n+1$, that is $\partial \Theta \in \bX_{n+1}(G)$.
Let $K\{\partial \}$ denote the non-commutative ring of operators 
induced by the relation
$\partial . r = r. \partial + r'$ where $r'$ is the derivative of $r \in K$ with
respect to the fixed derivation on $K$.
Then by the above discussion $\bX_\infty(G)$ naturally 
is a $K\{\partial\}$-module.
}

\subsection{Representation of differential characters in local coordinates}

In this subsection, we will develop the necessary theory on representing 
differential characters $\Theta$ in terms of local coordinate functions. The
following discussions and results will be  used in  section \ref{primchar}.


Consider the affine scheme $U = \Spec C$ over $S$ where $C=\frac{K[\bx]}{(\bff)}$ and $\bx= x_1,\dots, x_l$ 
are a system of indeterminates and the ideal $(\bff) = (f_1, \dots , f_q)$, 
$f_i \in K[\bx]$ for all $i$.
Then $J^nU = \Spec J_nC$ where 
$J_nC = \frac{K[\bx,\Del(\bx),\cdots, \Del^{n}(\bx)]}{(\bff, \partial \bff, \dots , 
\partial^n \bff)}$ and $(\partial^i\bff)= (\partial^i f_1,\dots, 
\partial^i f_q)$.
Also let $e:S \map U$ be a section such that the
associated map of $K$-algebras satisfies $\iota^*: C \map K$ such that 
$\iota^*(\bx)
=0$ (the notation here means $\iota^*(x_j) = 0$ for all $j=1,\dots l$).

Define the affine closed subscheme $N^nU = J^nU \times_{U,e} S$. Then we 
have $N^nU \simeq \Spec N_nC$ 
where $N_nC = \frac{K[\Del(\bx),\cdots, \Del^n(\bx)]}{(\partial \bff, \dots , 
\partial^n \bff)}$
and the canonical morphism $\iota: N^nU \map J^nU$ is associated to the 
$K$-algebra map 
$\iota^*: J_nC \map N_nC$ given by $\iota^*(\bx)=0, \iota^*(\Del^j(\bx)) = 
\Del^j(\bx)$ for all 
$j =1, \dots , n$. Indeed $N^nU$ is the closed subscheme associated to the
ideal $(\bx) \subset \Ou(J^nU)$.

In the case when $U$ is an affine open subscheme of a group scheme
$G$ such that $U$ contains the identity section $e:S \map G$, we have
$N^nU \simeq N^nG$ for all $n$.

\begin{lemma}
\label{deltaT}
Let $f \in \Ou(J^nU)$ such that $\iota^*f =0$, that is $f \in (\bx) \subset 
\Ou(J^nU)$. Then for all $s$ we have 
$$
\partial^s(\bx.J_nC) \subset \bx . J_{n+s}C + u^*J_{n+s-1}C. \\
$$

In particular
$$
\iota^* (\partial^s(\bx . J_nC)) \subset u^* J_{n+s-1}C \subset J_{n+s}C.
$$
\end{lemma}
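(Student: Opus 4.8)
The plan is to reduce the claim to a single computational fact about how the derivation $\partial$ interacts with the ideal $(\bx) \subset J_nC$, and then iterate. Recall from the explicit description of jet spaces that $\partial(\Del^j(x_i)) = \Del^{j+1}(x_i)$, that $u^*$ is the inclusion $J_{n-1}C \hookrightarrow J_nC$, and that $\Del^{j}(\bx)$ for $j\ge 1$ already lies in $u^*J_{n}C$ as soon as $n\ge j$ (indeed these coordinates are pulled back from lower order jet spaces, and crucially $\iota^*(\Del^j(\bx)) = \Del^j(\bx)$ is \emph{not} zero for $j\ge 1$ — it is only the order-zero coordinate $\bx$ itself that is killed by $\iota^*$). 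So the heart of the matter is that applying $\partial$ once to an element of $\bx\cdot J_nC$ produces a sum of two kinds of terms: terms still divisible by $\bx$ (when $\partial$ hits the $J_nC$-factor), and terms of the form $\Del(x_i)\cdot(\text{something in }J_{n+1}C)$ (when $\partial$ hits one of the $x_i$'s), and this second kind lies in $u^*J_nC$ because $\Del(\bx)\subset u^*J_1C$.

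First I would prove the case $s=1$ directly. Write a general element of $\bx\cdot J_nC$ as $\sum_i x_i g_i$ with $g_i\in J_nC$. Then
\[
\partial\!\left(\sum_i x_i g_i\right) = \sum_i \Del(x_i)\, g_i + \sum_i x_i\, \partial(g_i),
\]
where $\partial(g_i)\in J_{n+1}C$. The first sum lies in $u^*J_{n}C \subset u^*J_{n+s-1}C|_{s=1}$ since each $\Del(x_i)$ and each $g_i\in J_nC$ is pulled back from $J_nC$ (note $g_i$ need not be divisible by $\bx$, but that is fine — it still sits in $u^*J_nC$). The second sum lies in $\bx\cdot J_{n+1}C$. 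This establishes
\[
\partial(\bx\cdot J_nC) \subset \bx\cdot J_{n+1}C + u^*J_{n}C.
\]

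Next I would iterate by induction on $s$. Suppose the inclusion holds for $s$, i.e. $\partial^s(\bx\cdot J_nC) \subset \bx\cdot J_{n+s}C + u^*J_{n+s-1}C$. Apply $\partial$ to both sides. For the first summand, the $s=1$ case (applied with $n$ replaced by $n+s$) gives $\partial(\bx\cdot J_{n+s}C)\subset \bx\cdot J_{n+s+1}C + u^*J_{n+s}C$. For the second summand, $\partial(u^*J_{n+s-1}C)\subset u^*J_{n+s}C$ since $\partial$ maps $J_{n+s-1}C$ into $J_{n+s}C$ and this is compatible with the inclusions $u^*$ — concretely, $\partial$ of anything pulled back from $J_{n+s-1}C$ lands in $J_{n+s}C$, hence in $u^*J_{n+s}C$ viewed inside $J_{n+s+1}C$. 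Combining, $\partial^{s+1}(\bx\cdot J_nC)\subset \bx\cdot J_{n+s+1}C + u^*J_{n+s}C$, which is the claim for $s+1$. For the ``in particular'' statement, apply $\iota^*$ to the inclusion for $s$: since $\iota^*(\bx) = 0$ we get $\iota^*(\bx\cdot J_{n+s}C) = 0$, while $\iota^*$ restricted to the pulled-back subring $u^*J_{n+s-1}C$ is injective (it acts as the identity on the surviving coordinates $\Del^j(\bx)$, $j\ge 1$), so $\iota^*(\partial^s(\bx\cdot J_nC)) \subset \iota^*(u^*J_{n+s-1}C) \subset u^*J_{n+s-1}C \subset J_{n+s}C$, using that $\iota^*\circ u^*$ can be identified with the natural map $J_{n+s-1}C\to J_{n+s}C$ under the implicit identifications.

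The main obstacle I anticipate is bookkeeping rather than mathematics: being careful about the abuse of notation whereby $u^*J_{m}C$ is treated as literally sitting inside $J_{m+1}C$ (and inside $J_{n+s}C$ for all larger indices), and about the fact that $\iota^*$ does \emph{not} kill the higher-order coordinates — so one must verify that the "good" terms produced by $\partial$ genuinely land in the $u^*$-image and not merely in $J_{n+s}C$. Making the identification $N_nC = K[\Del(\bx),\dots,\Del^n(\bx)]/(\partial\bff,\dots,\partial^n\bff)$ explicit, as the excerpt does, should make all of this mechanical; no deeper input (such as the group structure, or representability) is needed for this lemma.
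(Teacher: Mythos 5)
Your proposal is correct and follows essentially the same route as the paper: induction on $s$, with the key point being that the Leibniz rule splits $\partial(x_i g_i)$ into $x_i\,\partial(g_i)$ (still in $(\bx)$) and $\Del(x_i)g_i$ (pulled back from one level down); you merely package the inductive step as ``apply the $s=1$ case at level $n+s$'' rather than recomputing it. One small remark: the parenthetical claim that $\iota^*$ is injective on $u^*J_{n+s-1}C$ is false (it kills $\bx$), but it is also unnecessary — all you need is that $\iota^*$ maps $J_{n+s-1}C$ into the subring generated by the $\Del^j(\bx)$, $j\ge 1$, which sits inside $u^*J_{n+s-1}C$.
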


\begin{proof}
It is sufficient to prove the result for the polynomial algebra $C=K[\bx]$. 
For any $f \in \Ou(J^nU)$ such that $\iota^*f =
0$, we have $f \in (\bx)$. Hence there exists elements $h_1, \dots  h_l \in 
\Ou(J^nU)$ such that 
$$
f = h_1 x_1 + \cdots h_l x_l.
$$
We will now apply induction on $s$. Clearly the result holds for 
$s=0$. Assume it is true for $s$. Then by the induction hypothesis we have 
$$
\partial^s f = f_{s_1}+ f_{s_2},
$$
where $f_{s_1} \in (\bx)$ and $f_{s_2} \in \Ou(J^{n+s-1}u)$. Since $f_{s_1}
 \in (\bx)$, there
exists $t_1,\dots, t_l \in \Ou(J^{n+s-1}U)$ such that $f_{s_1}= t_1 x_1 + \cdots
t_l x_l$. Applying the derivation $\partial$ to the above we obtain
$$\partial^{s+1} f = f_{{s+1}_1} + f_{{s+1}_2}$$
where $f_{{s+1}_1} = (\partial t_1) x_1 + \cdots (\partial t_l) x_l$ and  
 $f_{{s+1}_2} = t_1 \Del(x_1) + \cdots t_1 \Del(x_l) + \partial (f_{s_2})$.
Then note that $f_{{s+1}_2} \in \Ou(J^{n+s}U)$ and $f_{{s+1}_1} \in (\bx)$
and this proves our result.
\end{proof}


Let $G$ be a smooth group scheme of dimension $g$ over $S$.
Then there is an open affine subscheme $U$ of $G$ containing the identity section $e$, such that, we have an \'etale morphism of \cy{affine} schemes $U\rightarrow \bb{A}^g$, where $\Ha=\Spec(K[\bx]),$ $\bx=(x_1,\dots,x_g)$. 
Then $\bx$ is referred to as a system of \'etale coordinates around the 
identity section of $G$. By \cite{Bu1} we have the
following isomorphism of schemes
$$J^nU\cong U \times_{\Ha} J^n{\Ha}$$
for all $n$. For any $K$-algebra $B$, we have $$J^n\Ha(B)=\Hom_K(K[\bx],D_n(B)).$$
Hence we obtain $J^n\Ha\cong \Spec K[\bx,\Del(\bx),\cdots,\Del^n(\bx)]$, where $\partial^i\bx=(\Del^i( x_1),\cdots, \Del^i(x_n)).$ Therefore, we have 
$$J^nU \simeq U \times_{\Ha} J^n{\Ha} \simeq U \times_{\Ha}\Spec(K[\bx,\cdots,\Del^n(\bx)]).$$
Hence $\{\Del^i(\bx)\}$ form an \'etale coordinate system on $J^nG$ around the identity section. 
Now $N^nG$ can be obtained as the following fibre product
$$\xymatrix{
N^nG=J^nU\times_US\ar[r]^-{}\ar[d] & J^nU \ar[d]\\
 S \ar[r]^-{e} & U ,
}$$
where the product is taken over the identity section $S\xrightarrow{e} U$. 
By composition, we have a section $S\xrightarrow{e}U\rightarrow \Ha$ of $\Ha$, 
which is given by the map of $K$-algebras $K[\bx]\rightarrow K$ as $\bx\mapsto0$.  
Hence we have 
$$N^nG =J^nU\times_{U,e} S=(J^n\Ha\times_{\Ha} U)\times_U S=J^n\Ha\times_{\Ha} S\cong \Spec(K[\Del(\bx),\cdots,\Del^n(\bx)]).$$
Hence the functions $\{\Del(\bx),\cdots , \Del^n(\bx)\}$ form a coordinate system
for the vector group $N^nG$.

For a connected smooth commutative group scheme $G$, by Theorem $2$ in 
\cite{Brion}, $G$ satisfies the short exact sequence of group schemes
$$
0 \longrightarrow H \stk{\kappa}{\longrightarrow} G \longrightarrow A 
\longrightarrow 0, 
$$
where $H$ is a connected smooth affine algebraic group and $A$ is an abelian 
scheme. Since $\bX_0(A)=\{0\}$, we have $\bX_0(G) \stk{\kappa^*}{\inj}
 \bX_0(H)$. Now by Theorem 
$5.3.1$ of \cite{Brion}, $H$ is a \cblu{split} extension of a 
\cblu{vector group}
 $V$ by a commutative group scheme of multiplicative type $M$
$$
0 \map M \map H \map V \map 0.
$$
Since the characteristic of the base field $K$ is $0$, by \cite{Serre} Page 
$171$, $V \simeq \Ga^m$ for some $m$. 

Also note that for any multiplicative
type group scheme  $M$, there
are no non-trivial group scheme morphisms from $M$ to $\Ga$, that is $\bX_0(M)=
\{0\}$. Hence we have $\bX_0(H) \simeq \bX_0(V)$. Therefore combining
with the above we have $\bX_0(G) \stk{\kappa^*}{\inj} \bX_0(V)\simeq
\mathrm{Mat}_{1\times g}(K)$.



Let $\bx$ be 
an \'{e}tale coordinate coordinate system  of $G$ around the identity 
section. Hence any additive character $\Psi \in \bX_0(G)$
can be represented as $\Psi = A. \bx= a_1 x_1 + \cdots + a_g x_g$ where 
$A = (a_1 \cdots a_g) \in 
\mathrm{Mat}_{1\times g}(K)$.

We recall the following result.
\begin{proposition}
\label{Nn}
\cy{Let $G$ be a smooth commutative group scheme over $S$. Then}
for all $n$ we have
\begin{enumerate}
\item $N^nG \simeq \Ga^{ng}$.

\item The $K$-vector space $\bX_0(N^nG)$ is of dimension $ng$.
\end{enumerate}
\end{proposition}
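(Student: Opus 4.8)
The plan is to build on the \'etale-coordinate description of $N^nG$ obtained just above the statement. There we showed that if $U\ni e$ is an affine open of $G$ carrying an \'etale coordinate system $\bx=(x_1,\dots,x_g)$, then $N^nG\cong N^nU\cong\Spec K[\Del(\bx),\dots,\Del^n(\bx)]$; in particular $N^nG$ is, as a scheme, $\AA^{ng}_K$, hence smooth, connected, affine and of dimension $ng$ over $S$. Moreover $N^nG$ is commutative, being a closed subgroup scheme of $J^nG$ whose $B$-points form the abelian group $G(D_n(B))$. So the only genuine content of part (1) is that the group law on $N^nG$ is that of a vector group, and I will deduce part (2) from part (1).

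For (1), first I would check that $N^nG$ admits no nontrivial homomorphism to $\mathbb{G}_m$: such a homomorphism corresponds to an invertible element $f\in\Ou(N^nG)\cong K[\Del(\bx),\dots,\Del^n(\bx)]$ with $f(e)=1$, but the units of a polynomial ring over $K$ lie in $K^\times$, so $f\equiv 1$. Then I would invoke the structure theorem for connected commutative affine $K$-group schemes (Theorem $5.3.1$ of \cite{Brion}, used above for $H$): $N^nG$ fits into a \emph{split} short exact sequence $0\to M\to N^nG\to V\to 0$ with $V$ a vector group and $M$ of multiplicative type. If $M\neq 0$, then, the sequence being split, $N^nG\cong M\times V$, and any nontrivial character $M\to\mathbb{G}_m$ would pull back to a nontrivial homomorphism $N^nG\to\mathbb{G}_m$, contradicting the previous step; hence $M=0$ and $N^nG=V$ is a vector group. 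Since $\mathrm{char}\,K=0$, $V\cong\Ga^{\dim V}$ by \cite{Serre} (Page $171$), and comparing dimensions gives $N^nG\cong\Ga^{ng}$.

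For (2), part (1) gives $\bX_0(N^nG)=\Hom(\Ga^{ng},\Ga)$. A homomorphism $\Ga^{ng}\to\Ga$ of $K$-group schemes is a polynomial $P\in K[t_1,\dots,t_{ng}]$ with $P(0)=0$ and $P(\mathbf t+\mathbf s)=P(\mathbf t)+P(\mathbf s)$; restricting to each coordinate axis yields an additive one-variable polynomial, which in characteristic zero is $K$-linear, and then additivity in all variables forces $P=\sum_i c_i t_i$ with $c_i\in K$. Hence $\bX_0(N^nG)\cong\mathrm{Mat}_{1\times ng}(K)$, a $K$-vector space of dimension $ng$.

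The step I expect to require the most care is ruling out the multiplicative-type part $M$ in (1); once $N^nG$ is recognized as a smooth connected commutative affine $K$-group with trivial character group, the rest is the characteristic-zero structure theory already cited in the text. Alternative routes would be induction on $n$ via the exact sequences $0\to\Lie(G)\to N^nG\to N^{n-1}G\to 0$ together with the vanishing of $\Ext^1$ among vector groups in characteristic zero, or the exponential isomorphism $\log:\widehat G\isomap\widehat{\Lie G}$ restricted to the nilpotent ideal $(\epsilon)\subset D_n(B)$; but the argument via \cite{Brion} and \cite{Serre} is the most economical and fits the surrounding exposition.
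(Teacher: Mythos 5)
The paper itself offers no proof here --- it simply cites Chapter 3 of \cite{Bu1} --- so your self-contained argument is genuinely additional content rather than a variant of the paper's. Your overall strategy is sound: the scheme-theoretic identification $N^nG\cong \Spec K[\Del(\bx),\dots,\Del^n(\bx)]$ established just above the statement does reduce part (1) to identifying the group law, and part (2) is handled correctly (an additive polynomial over a field of characteristic zero is linear, so $\Hom(\Ga^{ng},\Ga)\cong \mathrm{Mat}_{1\times ng}(K)$).

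There is, however, one step that fails as written: to kill the multiplicative part $M$ you assume that a nontrivial multiplicative type group admits a nontrivial character $M\to\GG_m$ \emph{over $K$}. That is false over a non-algebraically-closed field --- an anisotropic torus (e.g.\ the norm-one torus of a quadratic extension) is nontrivial yet has $\Hom_K(M,\GG_m)=0$, so the pullback argument proves nothing in that case. The repair is immediate: base change to $\bar K$. The coordinate ring of $(N^nG)_{\bar K}$ is still a polynomial ring, so its only units are constants and $\Hom_{\bar K}((N^nG)_{\bar K},\GG_m)=0$; on the other hand $M_{\bar K}$ is diagonalizable, hence trivial if and only if its character group is trivial. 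Therefore $M_{\bar K}=0$ and so $M=0$, and the rest of your argument goes through. Alternatively, the induction you mention in your last paragraph --- $N^nG$ is a successive extension of the kernels $H^iG\cong\Ga^g$ of $u:J^iG\to J^{i-1}G$, and $\Ext(\Ga^a,\Ga^b)=0$ in characteristic zero --- avoids the issue entirely and is closer in spirit to the purely jet-theoretic methods the paper advertises.
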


\begin{proof}
We refer to Chapter $3$ in \cite{Bu1} for proof.
\end{proof}


Recall the exact sequence of $K$-vector spaces
\begin{align}
\label{io}
0 \longrightarrow \bX_0(G) \longrightarrow \bX_n(G) 
\stk{\iota^*}{\longrightarrow} \bX_0(N^nG)
\end{align}
which induces the map $\iota^*: \bX_n(G)/\bX_0(G) \map \bX_0(N^nG)$. 

Note that $\iota^*\Theta$ is a $K$-linear morphism from
$N^nG~ (\simeq \Ga^{ng})$ to $\Ga$. Therefore  
$$\iota^*\Theta=A_n.\Del^n(\bx)+A_{n-1}.\Del^{n-1}(\bx)+\cdots +A_1.\Del(\bx),$$ 
for some $A_i \in \mathrm{Mat}_{1\times g}(K)$,  $i=1,\dots, n$. 

On the other hand, any element $\Psi \in \bX_0(G)$ is of the form
$A_0. \bx$ for some $A_0 \in \mathrm{Mat}_{1\times g}(K)$.


For a given differential character $\Theta \in \bX_n(G)$, we wish to now 
find its expression
in terms of the \'{e}tale coordinates $\bx,\Del(\bx),\cdots, \Del^n(\bx)$ of 
$J^nU$ where $U$ is an open subscheme of $G$ containing the identity section
that admits an \'etale map $U \map \Ha = \Spec R[\bx]$.
Note that $\iota^*\Theta$ is obtained from $\Theta(\bx,\dots, \Del^n(\bx))$ 
by setting $\bx=(0,\dots, 0)$. Therefore combining the above discussion 
with (\ref{io}) we have 
\begin{align}
\label{fullexp}
\Theta (\bx,\Del(\bx),\dots \Del^n(\bx))=A_n.\Del^n(\bx)+A_{n-1}.\Del^{n-1}(\bx)+\cdots +A_1.\Del(\bx)+A_0 . \bx + 
f,
\end{align}
where $f \in (\bx) \subset \Ou(J^n(U))$. 

Given a differential character $\Theta \in \bX_n(G) \backslash \bX_{n-1}(G)$,
we define the {\it leading linear term} of $\Theta$ with respect to the 
chosen \'{e}tale coordinate to be 
$$
L(\Theta) := A_n.\Del^n(\bx)
$$
and $n$ will be called the {\it order} of $L(\Theta)$.

\section{Primitive differential characters of $G$.}
\label{primchar}
The aim of this section is to show that $\bX_\infty(G)$ is a free 
$K\{\partial\}$-module of rank $g$ and is generated by $\bX_{r+1}(G)$ where
$r = \dim_K \Ext^1(G, \hG)$.
Given a smooth group scheme $G$ over $S$,  consider 
the following short exact sequence of \cy{smooth group schemes over $S$ for 
all $n$
\begin{align}
0 \map N^nG \stk{\iota}{\longrightarrow} J^nG \stk{u}{\longrightarrow} G \map 0,
\end{align}
where $N^nG$ is the kernel of the projection map $u:J^nG\rightarrow G$.}
Applying $\Hom(-,\Ga)$ to the above short exact sequence gives us the following exact sequence of $K$-vector spaces
\begin{align}
\label{les}
0 \map \bX_0(G) \map \bX_n(G)  \stk{\iota^*}{\longrightarrow}  \bX(N^nG) 
\stk{\Delta}{\longrightarrow} \Ext(G,\Ga),
\end{align}
where $\bX_0(G)=\Hom(G,\Ga)$ for any group scheme $G$. Let $\dim_K \Ext(G,\Ga) = r$. Also let us define $\bI_n := \mb{image}(\Delta)$. We 
define $\mlow$ as the {\it lower splitting number} if $\bX_{\mlow}(G) \ne 
\{0\}$ and 
$\bX_{\mlow-1}(G) = \{0\}$. 

We define the {\it upper splitting number} as the 
smallest number $\mup$ such that $\dim_K \bI_n$ is constant for all $n \geq 
\mup-1$. Note that $\mup$ exists since for all $n$, $\dim_K \bI_n \leq 
\dim_K \Ext(G,\Ga) < \infty$. We will see in Proposition \ref{mlmu} that 
this definition is equivalent to the least $m=\mup$ such that 
$\bX_{m}(G)$ generates $\bX_\infty(G)$ as a $K\{\partial\}$-module.


We define a differential character $\Theta \in \bX_n(G)$ to be 
{\it primitive} if 
$$\Theta \not \in u^* \bX_{n-1}(G) + \partial\bX_{n-1}(G).$$
\cy{Hence} the image of $\Theta$ in the quotient 
$\bX_n(G)/(u^*\bX_{n-1}(G) + \partial\bX_{n-1}(G))$ is non-zero. 
We recall the exact sequence (\ref{les})
$$0 \map \bX_0(G) \map \bX_n(G) \stk{\iota^*}{\longrightarrow} \bX(N^nG) 
\stk{\Delta}{\longrightarrow} 
\Ext(G,\Ga).$$
Set $h_0=0$ and
define $h_i= \dim \bI_i - \dim \bI_{i-1}$ for all $i \geq 1$. Then note 
that $h_1$ is the $K$-dimension of $\bI_1$. For each $i$, 
we define the primitive dimension of $\bX_i(G)$ to be 
$l_i:= \dim \bX_i(G) - \dim (u^*\bX_{i-1}(G) + \partial\bX_{i-1}(G))$.

We say the set 
$\mathbbm{B}_i:=\{\Theta_{if}\}^{\mult(i)}_{f=1}$, where $\mult(i):=|\mbB_i|$, 
is a {\it primitive subset} 
for $\bX_i(G)$ if their images in $\bX_i(G)/(u^*\bX_{i-1}(G) + 
\partial\bX_{i-1}(G))$ are distinct and form a $K$-basis of the quotient $\bX_i(G)/(u^*\bX_{i-1}(G)_K + 
\partial\bX_{i-1}(G))$. 

Let $\mathbbm{B}$
be the set such that it generates $\bX_{\infty}(G)$ as a 
$K\{\partial\}$-module. 
For each $i$, denote $\mathbbm{B}_i = \mathbbm{B} \cap (\bX_i(G) 
\backslash u^*\bX_{i-1}(G))$. We define $\mathbbm{B}$ to {\it 
primitively generate} $\bX_\infty(G)$ if, for all $i$, $\mathbbm{B}_i$ is
a primitive subset for $\bX_i(G )$. Note that one can always construct such a
$\mathbbm{B}$ by taking the union of $\mathbbm{B}_i$ over all $i$, where $\mathbbm{B}_i$ is 
a primitive subset for $\bX_i(G)$ and $\mult(i)=l_i$.

For all $n \geq i$ set $S_n(\mathbbm{B}_i) := \{\partial^{h}\Theta \mid  
\mb{ for all } 0 \leq h 
\leq (n-i) \mb{ and } \Theta \in \mathbbm{B}_i\}$. \cblu{Define 
$$\bXp(G) := \varinjlim \bX_n(G)/\langle \partial\bX_{n-1}(G)\rangle,$$
where $\langle \partial \bX_{n-1}(G)\rangle$ is the $K$-vector space generated
by elements $\partial \Theta$ for all $\Theta \in \bX_{n-1}(G)$.}

\begin{lemma}
\label{partlead}
Let $\Theta \in \bX_n(G) \backslash u^*\bX_{n-1}(G)$ such that $L(\Theta) =
A_n. \Del^n(\bx)$ for some non-zero $A_n \in \mathrm{Mat}_{1\times g}(K)$. 
Then $L(\partial^s(\Theta)) = A_n. \Del^{n+s}(\bx)$.
\end{lemma}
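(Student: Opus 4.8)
The plan is to prove the claim by induction on $s$, with the base case $s=0$ being trivial since $L(\Theta) = A_n.\Del^n(\bx)$ by definition. For the inductive step, I would start from the full local-coordinate expansion of $\Theta$ guaranteed by~(\ref{fullexp}),
$$
\Theta = A_n.\Del^n(\bx) + A_{n-1}.\Del^{n-1}(\bx) + \cdots + A_1.\Del(\bx) + A_0.\bx + f,
$$
with $f \in (\bx) \subset \Ou(J^nU)$. Applying the derivation $\partial$ termwise and using $\partial(\Del^j(\bx)) = \Del^{j+1}(\bx)$, I get that $\partial\Theta$ has leading term $A_n.\Del^{n+1}(\bx)$ plus lower-order linear terms $\partial(A_i).\Del^i(\bx) + A_i.\Del^{i+1}(\bx)$ for $i \le n-1$, plus $\partial A_0.\bx + A_0.\Del(\bx)$, plus $\partial f$. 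The key point is controlling the contribution of $\partial f$: by Lemma~\ref{deltaT} applied with $s=1$, since $f \in (\bx) \subset \Ou(J^nU)$, we have $\partial f \in \bx.J_{n+1}C + u^*J_nC$, so $\partial f$ contributes no new linear monomial of the form $(\text{const}).\Del^{n+1}(\bx)$ — its part outside the ideal $(\bx)$ lives in $\Ou(J^nU)$, hence involves only $\Del^j(\bx)$ with $j \le n$, and its part inside $(\bx)$ has zero leading linear term after applying $\iota^*$. Therefore the coefficient of $\Del^{n+1}(\bx)$ in $\partial\Theta$ is exactly $A_n$, which is nonzero, so $\partial\Theta \in \bX_{n+1}(G)\setminus u^*\bX_n(G)$ and $L(\partial\Theta) = A_n.\Del^{n+1}(\bx)$.

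With the case $s=1$ established, the general case follows by iterating: assuming $L(\partial^s\Theta) = A_n.\Del^{n+s}(\bx)$ and that $\partial^s\Theta \in \bX_{n+s}(G)\setminus u^*\bX_{n+s-1}(G)$, I apply the $s=1$ step to $\partial^s\Theta$ in place of $\Theta$ (noting the coefficient matrix $A_n$ is unchanged, and is still nonzero) to conclude $L(\partial^{s+1}\Theta) = A_n.\Del^{n+s+1}(\bx)$. Alternatively, one can directly use Lemma~\ref{deltaT} with general $s$: writing $\Theta = (\text{linear part}) + f$ with $f \in (\bx)$, we have $\partial^s f \in \bx.J_{n+s}C + u^*J_{n+s-1}C$, so $\iota^*(\partial^s f) \in u^*J_{n+s-1}C$, meaning $\partial^s f$ contributes nothing to the coefficient of $\Del^{n+s}(\bx)$ in $\iota^*(\partial^s\Theta)$; and the linear part of $\Theta$, being a $K$-linear combination of the $\Del^j(x_i)$, has its top coefficient shifted cleanly to $A_n.\Del^{n+s}(\bx)$ under $\partial^s$ (all the cross terms from the Leibniz rule on coefficients in $K$ land in strictly lower order). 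This gives the result in one shot.

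The main obstacle is the bookkeeping around $\partial f$: one must be careful that differentiating an element of the ideal $(\bx)$ in $J_nC$ does not accidentally produce a term $c.\Del^{n+s}(\bx)$ with $c \in K$ that could cancel or alter $A_n$. This is precisely what Lemma~\ref{deltaT} rules out, so the proof is essentially an invocation of that lemma combined with the elementary observation that $\partial$ shifts $\Del^j$ to $\Del^{j+1}$ and that the Leibniz correction terms $\partial(A_i).\Del^i(\bx)$ stay at order $\le i < n+s$ (for $i<n$) or at order exactly $n+s$ but only coming from the genuine leading term when $i=n$. A secondary point worth stating explicitly is that $A_n \neq 0$ is preserved at each stage, which is what guarantees $\partial^s\Theta$ has strict order $n+s$ and hence lies outside $u^*\bX_{n+s-1}(G)$, so that ``$L(\partial^s\Theta)$'' is well-defined in the sense of the definition preceding the lemma.
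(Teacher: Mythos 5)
Your proposal is correct and follows essentially the same route as the paper: induction on $s$, expanding $\Theta$ via (\ref{fullexp}), applying $\partial$ termwise, and invoking Lemma~\ref{deltaT} to ensure that differentiating the ideal term $f \in (\bx)$ cannot contribute to the coefficient of $\Del^{n+s+1}(\bx)$. Your explicit remark that $A_n \neq 0$ is preserved at each stage (so that $\partial^s\Theta$ has strict order $n+s$ and $L(\partial^s\Theta)$ is well-defined) is a point the paper leaves implicit, but the argument is the same.
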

\begin{proof}
We will prove using induction on $s$. The result is true for $s=0$ and hence
under the induction hypothesis we have $L(\partial^s(\Theta)) = A_n.
\Del^{n+s}(\bx)$. Then by (\ref{fullexp}) we have 
$$
\partial^s(\Theta) = A_n \Del^{n+s}(\bx)+ B_{n+s-1}.\Del^{n+s-1}(\bx) + \cdots +
B_0 \bx + f 
$$
for some $B_i \in \mathrm{Mat}_{1\times g}(K)$ for all $i$ and $f \in (\bx)$.
Applying our derivation $\partial$ to the above equation we obtain
\beqar
\partial^{s+1}(\Theta) &=& (A_n.\Del^{n+s+1}(\bx)+ 
\partial A_n .\Del^{n+s}( \bx)) + (B_{n+s-1}.\Del^{n+s} (\bx) \\
& & + \partial B_{n+s-1}.\Del^{n+s-1}(\bx)) + \cdots  
 + (B_0.\Del( \bx) + \partial B_0. \bx) + \partial f.
\eeqar
Collecting coefficients we get
\beqar
\partial^{s+1}(\Theta) = A_n . \Del^{n+s+1}(\bx) + (\partial A_n + B_{n+s-1}).
\Del^{n+s}(\bx )+ \cdots + \partial B_0.\bx + \partial f.
\eeqar
By Lemma \ref{deltaT}, $\partial f = f_1 + f_2$ where $f_1 \in (\bx)$
and $f_2 \in \Ou(J^{n+s-1}U)$.
Since $s \geq 1$, note that $\ord{L(\Theta)} \geq 1$ which implies that 
$L(\Theta) \notin (\bx)$. Here $A_n.\Del^{n+s+1}(\bx) \in \Ou(J^{n+s+1}U) \backslash
\Ou(J^{n+s}U)$ where as $f_2 \in \Ou(J^{n+s-1}U)$. Therefore the leading
coefficient of $\partial^{s+1}(\Theta)$ must be $A_n.\Del^{n+s+1}(\bx)$.
\end{proof}

\begin{proposition}
\label{first}
If $\mathbbm{B}_i$ is a primitive subset for $\bX_i(G)$, then the set $S_n(\mathbbm{B}_{i})$  is $K$-linearly independent inside $\bX_n(G)$ for all $n\geq i$.
\end{proposition}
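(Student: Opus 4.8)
The plan is to run an induction on $n \geq i$, using the leading-linear-term function $L(\cdot)$ and Lemma~\ref{partlead} as the main bookkeeping tool. The base case $n=i$ is exactly the hypothesis that $\mathbbm{B}_i$ is a primitive subset: the images of the elements $\Theta_{if}$ in $\bX_i(G)/(u^*\bX_{i-1}(G)+\partial\bX_{i-1}(G))$ form a $K$-basis, so in particular they are $K$-linearly independent in $\bX_i(G)$ (and $S_i(\mathbbm{B}_i) = \mathbbm{B}_i$ since the only allowed power is $h=0$). For the inductive step, suppose $S_{n-1}(\mathbbm{B}_i)$ is $K$-linearly independent inside $\bX_{n-1}(G)$, and consider a vanishing $K$-linear combination
$$
\sum_{f=1}^{\mult(i)} \sum_{h=0}^{n-i} c_{f,h}\, \partial^h \Theta_{if} = 0
$$
with $c_{f,h} \in K$. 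The idea is to isolate the top-order part: the terms with $h = n-i$ lie in $\bX_n(G)$, their contributions to $L(\cdot)$ live in $\Ou(J^nU)\setminus\Ou(J^{n-1}U)$, while all terms with $h \le n-i-1$ lie in $\bX_{n-1}(G)\subseteq \Ou(J^{n-1}U)$. By Lemma~\ref{partlead}, if $\Theta_{if}$ has leading linear term $A^{(i,f)}_i\,\Del^i(\bx)$ then $\partial^{n-i}\Theta_{if}$ has leading linear term $A^{(i,f)}_i\,\Del^n(\bx)$, so the $\Del^n(\bx)$-coefficient of the whole sum is $\sum_f c_{f,\,n-i}\, A^{(i,f)}_i$, forced to vanish.

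The crux is then to deduce that $c_{f,\,n-i} = 0$ for every $f$ from the single vector equation $\sum_f c_{f,\,n-i} A^{(i,f)}_i = 0$. This is where I would invoke primitivity of $\mathbbm{B}_i$ more carefully: the matrices $A^{(i,f)}_i$ are the leading linear coefficients of a primitive subset, and one needs that $\{A^{(i,f)}_i\}_f$ is $K$-linearly independent in $\mathrm{Mat}_{1\times g}(K)$. Equivalently, one shows that a nonzero $K$-combination $\sum_f c_f \Theta_{if}$ with $\sum_f c_f A^{(i,f)}_i = 0$ would have order $< i$, i.e.\ would lie in $u^*\bX_{i-1}(G)$ (its leading linear term of order $i$ vanishes, and by (\ref{fullexp}) a character whose order-$i$ leading term is zero drops into $\Ou(J^{i-1}U)$, hence into $u^*\bX_{i-1}(G)$ by the exact sequence (\ref{io})), contradicting that the images of the $\Theta_{if}$ in $\bX_i(G)/(u^*\bX_{i-1}(G)+\partial\bX_{i-1}(G))$ are $K$-linearly independent. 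So $c_{f,\,n-i}=0$ for all $f$.

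Once the top slice of coefficients is killed, the remaining relation $\sum_{f}\sum_{h=0}^{n-i-1} c_{f,h}\,\partial^h\Theta_{if} = 0$ is a $K$-linear dependence among elements of $S_{n-1}(\mathbbm{B}_i)$, which by the inductive hypothesis forces all remaining $c_{f,h}=0$. This completes the induction, and hence $S_n(\mathbbm{B}_i)$ is $K$-linearly independent in $\bX_n(G)$ for all $n \ge i$. The main obstacle I anticipate is the middle step: making precise, via (\ref{fullexp}) and the exact sequence (\ref{io}), that the vanishing of the order-$i$ leading linear coefficient of a $K$-combination of the $\Theta_{if}$ forces that combination into $u^*\bX_{i-1}(G)$ — this is exactly the link between the combinatorial "leading term" data and the algebraic notion of primitive subset, and it must be stated carefully to avoid circularity with the definition of $\mathbbm{B}_i$.
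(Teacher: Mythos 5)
Your proposal is correct and follows essentially the same route as the paper: track leading linear terms via Lemma \ref{partlead}, observe that only the top slice $\partial^{n-i}\Theta_{if}$ contributes to the $\Del^n(\bx)$-coefficient of a vanishing combination, and reduce everything to the $K$-linear independence of the leading coefficients $\{A^{(i,f)}_i\}_f$, the lower slices being handled by induction (the paper phrases this as a minimal counterexample rather than an explicit induction). You are in fact more careful than the paper at the one delicate point --- that a $K$-combination of the $\Theta_{if}$ with vanishing order-$i$ leading linear term lies in $u^*\bX_{i-1}(G)$, so primitivity forces the $A^{(i,f)}_i$ to be independent --- which the paper simply asserts; just note that such a combination need not literally drop into $\Ou(J^{i-1}U)$ because of the $f\in(\bx)$ term in (\ref{fullexp}), so the argument must pass through $\iota^*$ and the exact sequence (\ref{io}), exactly as you indicate.
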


\begin{proof}
It is sufficient to consider $n \geq 1$. Let 
$\mbB_i=\{\Theta_{ik}\}_{k=1}^{l_i}$ and denote $\partial^h\mbB_i=\{\partial^h\Theta_{ik}\}_{k=1}^{l_i}$. Then
$S_n(\mbB_i)=\mbB_i\cup \partial\mbB_i\cup...\cup \partial^{n-i}\mbB_i$.

Let $L(\Theta_{ik}) = A_{ik}.\bx^{(i)}$ for all $k=1, \dots , l_i$.
Since $\mbB_i=\{\Theta_{ik}\}_{k=1}^{l_i}$ is a $K$-basis for $\bX_i(G)$, 
we must have that $\{A_{ik}\}_{k=1}^{l_i}$ are $K$-linearly independent.
Also for all $n \geq i$, by Lemma \ref{partlead} we must 
have
$L(\partial^{n-i} (\Theta_{ik})) = A_{ik}.\bx^{(n)}.$

Suppose $n$ be the least positive integer for which $S_n(\mbB_i)$ is not $K$-linearly independent. Then we must have $\gamma_1,\dots, \gamma_{l_i} \in K$ of which at least 
one of them is non-zero such that
$$\gamma_1L(\partial^{n-i}\Theta_{i1})+\cdots+
\gamma_{l_i}L(\partial^{n-i}\Theta_{il})=0.$$

This implies that we  have
 $$(\gamma_1A_{i1}+\cdots+\gamma_iA_{il}).\Del^n(\bx)=0.$$
But since $A_{i1},\dots, A_{il}$ are $K$-linearly independent, we must have 
$\gamma_1=\cdots=\gamma_l=0,$ which is a contradiction and we are done.     
\end{proof}




\begin{lemma}\label{1}
For any $\Psi\in \bX_m(G)$, if $\partial^i\Psi\in \bX_m(G)$, then $\Psi\in \bX_{m-i}(G)$.
\end{lemma}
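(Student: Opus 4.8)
The plan is to exploit the explicit description of the leading linear term $L$ developed in the previous subsection, together with Lemma~\ref{partlead}, which tracks exactly how $L$ behaves under $\partial$. Suppose $\Psi \in \bX_m(G)$ and $\partial^i \Psi \in \bX_m(G)$. If $\Psi$ has strict order $n \le m$, write its leading linear term as $L(\Psi) = A_n \cdot \Del^n(\bx)$ with $A_n \neq 0$. By Lemma~\ref{partlead}, applying $\partial^i$ produces $L(\partial^i\Psi) = A_n \cdot \Del^{n+i}(\bx)$, so $\partial^i\Psi$ has strict order exactly $n+i$. But $\partial^i\Psi \in \bX_m(G)$ forces $n+i \le m$, hence $n \le m-i$, i.e. $\Psi \in \bX_{m-i}(G)$.

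The one degenerate case to dispatch first is when $\Psi$ has no nonzero leading linear term, which happens precisely when $\Psi \in \bX_0(G)$ (an order-zero character $A_0 \cdot \bx$, for which $L$ is not defined in the sense above). In that case $\Psi \in \bX_0(G) \subseteq \bX_{m-i}(G)$ automatically (since $m - i \ge 0$ as $\partial^i \Psi$ being defined and lying in $\bX_m(G)$ presupposes $i \le m$), so the statement holds trivially. Thus I would begin the proof by separating the case $\Psi \in \bX_0(G)$ from the case where $\Psi$ has a genuine strict order $n \ge 1$ with a well-defined nonzero $L(\Psi)$.

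The only subtlety — and the step I would be most careful with — is the bookkeeping about orders: one must use that $u^*$ is injective (stated in the excerpt) so that "strict order $n$" is well-defined, and that $L(\Psi) \notin (\bx)$ when $n \ge 1$ so that the leading term genuinely detects the order $n+i$ and cannot be cancelled by lower-order contributions in the expansion \eqref{fullexp}. This is exactly the mechanism already used at the end of the proof of Lemma~\ref{partlead}, where $A_n \cdot \Del^{n+i}(\bx) \in \Ou(J^{n+i}U) \backslash \Ou(J^{n+i-1}U)$; I would invoke that same comparison to conclude that $\partial^i\Psi \notin \bX_{n+i-1}(G)$, pinning down its strict order. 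No serious obstacle is expected here; the result is essentially an immediate corollary of Lemma~\ref{partlead} once the order-zero case is handled.
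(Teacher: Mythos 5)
Your proposal is correct and follows essentially the same route as the paper: read off the leading linear term $L(\Psi) = A\cdot\Del^k(\bx)$, apply Lemma~\ref{partlead} to get $L(\partial^i\Psi)=A\cdot\Del^{k+i}(\bx)$, and conclude $k+i\le m$. Your explicit handling of the degenerate case $\Psi\in\bX_0(G)$ is a small refinement the paper's proof leaves implicit, but the argument is otherwise identical.
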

\begin{proof}
Let $L(\Psi) = A. \Del^k(\bx)$. Then by Lemma \ref{partlead} we must have 
$L(\partial^i(\Psi)) = A.\Del^{k+i}( \bx)$. Therefore we have $k+i \leq m$
which implies $k \leq m-i$. Hence $\Psi \in \bX_{m-i}(G)$ and we 
are done.
\end{proof}

\begin{proposition}\label{3}
If $\mathbbm{B}_i$ is a primitive subset for $\bX_i(G)$ for all $i$, then $S_n(\mathbbm{B}_{i_1})\cup \cdots \cup S_n(\mathbbm{B}_{i_k})$ is $K$-linearly 
independent for all $\{i_1,...,i_k\}\subset \{0,...,n\}$ over $K$.
\end{proposition}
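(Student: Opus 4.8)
The plan is to reduce the claim to Proposition~\ref{first} by exploiting the fact that the sets $S_n(\mbB_{i_1}),\dots,S_n(\mbB_{i_k})$ have leading linear terms living in coordinate rings of different orders. Concretely, suppose we have a $K$-linear relation
$$
\sum_{j=1}^{k}\ \sum_{\Theta\in\mbB_{i_j}}\ \sum_{h=0}^{n-i_j} c_{j,\Theta,h}\,\partial^h\Theta \;=\; 0
$$
inside $\bX_n(G)$, with not all coefficients zero. I would like to argue that the highest-order block forces its own coefficients to vanish, then induct downward. Let $i_k$ be the largest index among $i_1,\dots,i_k$ that actually appears with a nonzero coefficient somewhere, and among the characters $\Theta\in\mbB_{i_k}$ that occur, look at the largest $h$ with $c_{i_k,\Theta,h}\neq 0$; then $\partial^h\Theta$ has strict order $i_k+h$, and by Lemma~\ref{partlead} its leading linear term is $A.\,\Del^{i_k+h}(\bx)$ with $A\neq 0$, living in $\Ou(J^{i_k+h}U)\setminus\Ou(J^{i_k+h-1}U)$.

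The key step is to isolate the terms of maximal strict order in the relation. Set $N:=\max\{\,i_j+h : c_{j,\Theta,h}\neq 0\,\}$. Every term $\partial^h\Theta$ with $i_j+h<N$ lies in $u^*\bX_{N-1}(G)$, so its leading linear term (and indeed its whole expansion via \eqref{fullexp}) involves only $\bx,\Del(\bx),\dots,\Del^{N-1}(\bx)$. Thus, projecting the relation onto the coefficient of $\Del^{N}(\bx)$ in the expansion~\eqref{fullexp}, only the terms with $i_j+h=N$ survive. These are exactly terms of the form $\partial^{N-i_j}\Theta$ for $\Theta\in\mbB_{i_j}$; but note that among the sets $S_n(\mbB_{i_1}),\dots,S_n(\mbB_{i_k})$, a given value $i_j+h=N$ with $h\le n-i_j$ is attained, for each fixed $j$ with $i_j\le N$, by the single element $\partial^{N-i_j}\Theta$. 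So the surviving relation reads
$$
\sum_{j:\,i_j\le N}\ \sum_{\Theta\in\mbB_{i_j}} c_{j,\Theta,N-i_j}\;A_{\Theta}\;=\;0,
$$
where $A_\Theta\in\mathrm{Mat}_{1\times g}(K)$ is the leading coefficient of $\Theta$ (which by Lemma~\ref{partlead} is unchanged by applying $\partial^{N-i_j}$). Now I claim the collection $\{A_\Theta : \Theta\in\mbB_{i_j},\ i_j\le N\}$ is $K$-linearly independent: this is essentially the content already established in the proof of Proposition~\ref{first} for a single $\mbB_i$, and for distinct indices $i_j<i_{j'}$ the leading terms $L(\partial^{N-i_j}\Theta)$ and $L(\partial^{N-i_{j'}}\Theta')$ both sit in order $N$, so linear independence of all of them together must be argued. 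This is where I expect the main obstacle: across different source orders $i_j$, why are the $A_\Theta$ jointly independent? The resolution is a second downward induction — rather than comparing all blocks at order $N$ at once, use \emph{strict order} as the grading: if $c_{j,\Theta,N-i_j}A_\Theta$-terms summed to zero nontrivially, pick the largest $i_j$ (say $i_j = i_{k}$) among those contributing, observe that $S_n(\mbB_{i_{k}})\cap\bX_{N}(G)$ modulo $u^*\bX_{N-1}(G)+\partial\bX_{N-1}(G)$ lands in the primitive quotient where $\mbB_{i_{k}}$ is by hypothesis a basis, hence those coefficients vanish; then repeat.

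Alternatively — and this is probably the cleanest route to write up — I would induct on $k$. For $k=1$ this is exactly Proposition~\ref{first}. For the inductive step, take the relation above, let $N$ be the maximal strict order occurring, and split off all terms of strict order exactly $N$; these come from those $S_n(\mbB_{i_j})$ with $i_j\le N$, and modulo $u^*\bX_{N-1}(G)$ each such term $\partial^{N-i_j}\Theta$ reduces to an element of $\bX_N(G)/u^*\bX_{N-1}(G)$, detected faithfully by its leading coefficient $A_\Theta$. Applying Lemma~\ref{1} shows that if a $K$-combination of the $\partial^{N-i_j}\Theta$ lies in $u^*\bX_{N-1}(G)$ then in fact each $\partial^{N-i_j}\Theta$ with $i_j+ (N-i_j)=N$ forces $\Theta\in\bX_{0}$... more precisely, Lemma~\ref{1} together with Lemma~\ref{partlead} pins down that no nontrivial such combination can drop order. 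Hence all strict-order-$N$ coefficients vanish, the relation has strictly smaller maximal order, and we are reduced to a relation among $S_{N-1}(\mbB_{i_j})$'s — still at most $k$ of them, but now with strictly fewer terms, so termination follows by a descent on $(N,\ \#\text{terms})$. The routine bookkeeping — matching up which $\partial^h\Theta$ has which strict order, and that distinct pairs $(j,h)$ give distinct elements — I would relegate to a sentence, citing Lemma~\ref{partlead} and the injectivity of $u^*$.
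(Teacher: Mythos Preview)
Your overall plan---induction on $k$, isolate terms of maximal strict order $N$, then peel them off---is sound, and it is the same skeleton the paper uses. But there is a real gap at the step where you try to kill the strict-order-$N$ coefficients.

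You correctly identify the crux: one must show that the leading coefficients $\{A_\Theta:\Theta\in\bigcup_j\mbB_{i_j}\}$ are jointly $K$-independent across blocks, not just within a single $\mbB_{i_j}$. Neither of your proposed resolutions achieves this. In the first, you pass to the quotient $\bX_N/(u^*\bX_{N-1}+\partial\bX_{N-1})$ and appeal to ``the primitive quotient where $\mbB_{i_k}$ is by hypothesis a basis''. But $\mbB_{i_k}$ is a basis of the primitive quotient at level $i_k$, not at level $N$; and when $N>i_k$ (the generic situation, since $N$ is a maximal strict order and need not lie in $\{i_1,\dots,i_k\}$), \emph{every} term $\partial^{N-i_j}\Theta$ has $N-i_j\geq 1$, hence already lies in $\partial\bX_{N-1}$ and dies in that quotient. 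So the argument gives no information. In the second, you invoke Lemma~\ref{1}, but that lemma concerns a single element of the form $\partial^i\Psi$; it says nothing about a combination $\sum_j c_j\,\partial^{N-i_j}\Theta_j$ with \emph{different} exponents on $\partial$, which is exactly what you have.

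The paper's key move is to manufacture such a single $\Psi$. Taking the minimal $n$ at which dependence occurs and extracting the top-order coefficients $a_{hs}$ so that $\sum_{h,s} a_{hs}\,\partial^{\,n-i_h}\Theta_{i_hs}\in\bX_{n-1}(G)$, one sets
\[
\Psi:=\sum_{h,s} a_{hs}\,\partial^{\,i_k-i_h}\Theta_{i_hs}\ \in\ \bX_{i_k}(G).
\]
A Leibniz expansion gives $\partial^{\,n-i_k}\Psi\equiv \sum a_{hs}\,\partial^{\,n-i_h}\Theta_{i_hs}\pmod{\bX_{n-1}}$, so $\partial^{\,n-i_k}\Psi\in\bX_{n-1}$ and now Lemma~\ref{1} legitimately applies to the single element $\Psi$, forcing $\Psi\in\bX_{i_k-1}$. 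At this point primitivity of $\mbB_{i_k}$ is invoked at the \emph{correct} level $i_k$ (the $h=k$ summand $\sum_s a_{ks}\Theta_{i_ks}$ is congruent to $\Psi$ modulo $\partial\bX_{i_k-1}$), which kills the $a_{ks}$ and feeds the induction on $k$. The pull-back of the top-order relation from level $n$ down to level $i_k$ via $\Psi$ is precisely the ingredient missing from your argument.
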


\begin{proof}
We will prove using induction on $k$. For $k=1$, the result is true by
Proposition \ref{first}. Now assume the result is true for $\{i_1,\dots ,i_{k-1}
\}$.
Without loss of generality, we can assume that $i_1 <i_2<...<i_k$ to be a
strictly increasing sequence. Suppose the result does not hold for $k$. Then 
there exists a smallest $n$ such that 
$S_n(\mathbbm{B}_{i_1})\cup \cdots \cup S_n(\mathbbm{B}_{i_k})$ 
is not 
$K$-linearly independent. Since by the induction hypothesis the set
$S_{i_k}(\mathbbm{B}_{i_1}) \cup \cdots \cup S_{i_k}(\mathbbm{B}_{i_{k-1}})$ is
$K$-linearly independent and $\mathbbm{B}_{i_k}$ is a primitive subset 
for $\bX_{i_k}(G)$, we must have $n > i_k$.

For each $i$, let  $\mbB_i=\{\Theta_{is}\}_{s=1}^{l_i}$. 
Since $n$ is the smallest such integer we must have
atleast one non-zero scalar $a_{hs}$ such that 
\begin{equation}\label{eq:1}\sum_{h=1}^k(\sum_{s=1}^{l_{i_h}}a_{hs}\partial^{n-i_h}\Theta_{i_hs})\in \bX_{n-1}(G).\end{equation}
Define the differential character
 $$\Psi=\sum_{h=1}^k(\sum_{s=1}^{l_{i_h}}a_{hs}\partial^{i_k-i_h}\Theta_{i_hs})\in \bX_{i_k}(G).$$

Applying $\partial$ to the above expression $(n-i_k)$-times we obtain
\begin{align*}
\partial^{n-i_k}\Psi &=\partial^{n-i_k}(\sum_{h=1}^k(\sum_{s=1}^{l_{i_k}}a_{hs}\partial^{i_k-i_h}\Theta_{i_hs}))\\
&=\sum_{h=1}^k(\sum_{s=1}^{l_{i_k}}\partial^{n-i_k}(a_{hs}\partial^{i_k-i_h}\Theta_{i_hs}))\\
&= \sum_{h=1}^k(\sum_{s=1}^{l_{i_k}}a_{hs}\partial^{n-i_h}\Theta_{i_hs}))+
f
\end{align*}
for some $f \in \bX_{n-1}(G)$.
By (\ref{eq:1}) we have the summation to be in $\bX_{n-1}(G)$, hence $\partial^{n-i_k}\Psi \in \bX_{n-1}(G)$. Now by Lemma \ref{1}, we have $\Psi\in \bX_{i_k-1}(G)$ which implies
that $n=i_k$. But this is a contradiction to the fact that $n > i_k$ 
and we are done.
\end{proof}

\begin{proposition}
\label{dim}
$$\dim_K(u^*\bX_{n-1}(G) + \partial\bX_{n-1}(G)) = (n+1)l_0+ nl_1 + \cdots +
2l_{n-1}.$$

\end{proposition}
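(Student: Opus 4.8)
The plan is to produce an explicit $K$-basis of the subspace $W_n:=u^*\bX_{n-1}(G)+\partial\bX_{n-1}(G)$ of $\bX_n(G)$ (which is finite dimensional by Proposition \ref{Nn} and (\ref{io})). Fix a set $\mbB=\bigcup_i\mbB_i$ that primitively generates $\bX_\infty(G)$, so each $\mbB_i$ is a primitive subset of $\bX_i(G)$ with $|\mbB_i|=\mult(i)=l_i$. I claim $\bigcup_{i=0}^{n-1}S_n(\mbB_i)$ is such a basis; granting this, its cardinality is $\sum_{i=0}^{n-1}(n-i+1)\,l_i=(n+1)l_0+nl_1+\cdots+2l_{n-1}$, which is the asserted dimension. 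Linear independence of $\bigcup_{i=0}^{n-1}S_n(\mbB_i)$ is already available: it is Proposition \ref{3} applied to the index set $\{0,1,\dots,n-1\}$. So the remaining task is to show this family spans $W_n$.

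The inclusion $\mathrm{span}_K\bigl(\bigcup_{i=0}^{n-1}S_n(\mbB_i)\bigr)\subseteq W_n$ is immediate: for $\Theta\in\mbB_i$ with $i\le n-1$ and $0\le h\le n-i$, if $h\ge 1$ then $\partial^{h-1}\Theta\in\bX_{i+h-1}(G)\subseteq\bX_{n-1}(G)$, hence $\partial^h\Theta=\partial(\partial^{h-1}\Theta)\in\partial\bX_{n-1}(G)$; and if $h=0$ then, since $i\le n-1$, the canonical inclusion $\bX_i(G)\hookrightarrow\bX_n(G)$ factors through $u^*\colon\bX_{n-1}(G)\to\bX_n(G)$, so $\Theta\in u^*\bX_{n-1}(G)$. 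Either way $\partial^h\Theta\in W_n$.

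For the reverse inclusion I would prove, by induction on $n$, the slightly stronger statement $\bX_n(G)=\mathrm{span}_K\bigl(\bigcup_{i=0}^{n}S_n(\mbB_i)\bigr)$. The base case $n=0$ holds because $W_0=0$ and $\mbB_0=S_0(\mbB_0)$ is a $K$-basis of $\bX_0(G)$. For the inductive step, the defining property of the primitive subset $\mbB_n$ gives $\bX_n(G)=W_n+\mathrm{span}_K(\mbB_n)$ with $\mbB_n=S_n(\mbB_n)$, so it suffices to show $W_n\subseteq\mathrm{span}_K\bigl(\bigcup_{i=0}^{n-1}S_n(\mbB_i)\bigr)$. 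Using the induction hypothesis together with $\bX_{n-1}(G)=W_{n-1}+\mathrm{span}_K(\mbB_{n-1})$, write any $\Psi\in\bX_{n-1}(G)$ as $\Psi=\sum_j c_j\Phi_j$ with $c_j\in K$ and each $\Phi_j\in\bigcup_{i=0}^{n-1}S_{n-1}(\mbB_i)$. Then $u^*\Psi\in\mathrm{span}_K\bigl(\bigcup_{i=0}^{n-1}S_{n-1}(\mbB_i)\bigr)\subseteq\mathrm{span}_K\bigl(\bigcup_{i=0}^{n-1}S_n(\mbB_i)\bigr)$, whereas
$$\partial\Psi=\sum_j\bigl(c_j\,\partial\Phi_j+c_j'\,\Phi_j\bigr),$$
and writing $\Phi_j=\partial^{h}\Theta$ with $\Theta\in\mbB_{i_j}$ and $0\le h\le n-1-i_j$ gives $\partial\Phi_j=\partial^{h+1}\Theta\in S_n(\mbB_{i_j})$ and $\Phi_j\in S_{n-1}(\mbB_{i_j})\subseteq S_n(\mbB_{i_j})$; hence $\partial\Psi$, and therefore $W_n=u^*\bX_{n-1}(G)+\partial\bX_{n-1}(G)$, lies in $\mathrm{span}_K\bigl(\bigcup_{i=0}^{n-1}S_n(\mbB_i)\bigr)$. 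This closes the induction and gives the reverse inclusion.

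Combining the two inclusions yields $W_n=\mathrm{span}_K\bigl(\bigcup_{i=0}^{n-1}S_n(\mbB_i)\bigr)$, and since by Proposition \ref{3} this spanning family is $K$-linearly independent, it is a basis, so $\dim_K W_n=\sum_{i=0}^{n-1}(n-i+1)\,l_i=(n+1)l_0+nl_1+\cdots+2l_{n-1}$. I do not expect a genuine obstacle: the only mildly delicate point is that $\partial$ is not $K$-linear, so $\partial(c_j\Phi_j)$ produces the extra summand $c_j'\Phi_j$, but this is harmless because $\Phi_j$ already belongs to the spanning family. All the real content — linear independence of the shifted derivatives of distinct primitive characters — is carried by Propositions \ref{first} and \ref{3}, so the present statement amounts to an organized dimension count.
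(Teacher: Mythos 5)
Your proof is correct and follows essentially the same route as the paper's: exhibit $S_n(\mathbbm{B}_0)\cup\cdots\cup S_n(\mathbbm{B}_{n-1})$ as a $K$-basis of $u^*\bX_{n-1}(G)+\partial\bX_{n-1}(G)$, invoking Proposition \ref{3} for linear independence, and count its cardinality. The only difference is that you supply an explicit inductive argument for the spanning step (correctly handling the non-$K$-linearity of $\partial$ via the extra $c_j'\Phi_j$ terms), which the paper simply asserts.
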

\begin{proof}
Let $\mathbbm{B}_i$ be a primitive subset for $\bX_i(G)$ for all $i$. Then 
the cardinality of $\mathbbm{B}_i$ is $l_i$ and the set $S_n(\mathbbm{B}_0) 
\cup \cdots \cup S_n(\mathbbm{B}_{n-1})$ spans $u^* \bX_{n-1}(G) + \partial
\bX_{n-1}(G)$ as a $K$-vector space. By Proposition \ref{3}, 
they are $K$-linearly 
independent and hence form a $K$-basis. Then the result follows from the fact
that the cardinality of $S_n(\mathbbm{B}_0) \cup \cdots \cup S_n(
\mathbbm{B}_{n-1})$ is $(n+1)l_0 + nl_1 + \cdots + 2l_{n-1}$.
\end{proof}

\begin{corollary}
\label{dimX}
For all \cy{$n$}, we have $\dim_K \bX_n(G) = (n+1)l_0+ nl_1 + \cdots + 2l_{n-1} + l_n$.
\end{corollary}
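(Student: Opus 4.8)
The plan is to derive this immediately from Proposition \ref{dim} together with the very definition of the primitive dimension. Recall that $l_n$ was defined by $l_n := \dim_K \bX_n(G) - \dim_K(u^*\bX_{n-1}(G) + \partial\bX_{n-1}(G))$. So the first (and essentially only) step is to rearrange this into $\dim_K \bX_n(G) = \dim_K(u^*\bX_{n-1}(G) + \partial\bX_{n-1}(G)) + l_n$, and then to substitute the value of the right-hand term supplied by Proposition \ref{dim}, namely $(n+1)l_0 + nl_1 + \cdots + 2l_{n-1}$. This yields exactly
$$
\dim_K \bX_n(G) = (n+1)l_0 + nl_1 + \cdots + 2l_{n-1} + l_n,
$$
as claimed.

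There is no real obstacle in this corollary itself: all the substance has already been carried out in Proposition \ref{dim}, which in turn depends on the linear independence statement of Proposition \ref{3} (ensuring that the spanning set $S_n(\mathbbm{B}_0) \cup \cdots \cup S_n(\mathbbm{B}_{n-1})$ is actually a basis) and on the fact that a primitive subset $\mathbbm{B}_i$ has cardinality $l_i$. The only thing worth checking is the degenerate case $n=0$: there $u^*\bX_{-1}(G) + \partial\bX_{-1}(G)$ is taken to be $\{0\}$, so $l_0 = \dim_K \bX_0(G)$ and the formula reads $\dim_K \bX_0(G) = l_0$, which is consistent. Thus I would simply present the one-line computation and cite Proposition \ref{dim}.
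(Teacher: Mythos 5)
Your proposal is correct and is essentially identical to the paper's own proof: both simply rewrite the definition of $l_n$ as $\dim_K \bX_n(G) = \dim_K(u^*\bX_{n-1}(G)+\partial\bX_{n-1}(G)) + l_n$ and then invoke Proposition \ref{dim}. Your extra remark on the $n=0$ case is a harmless but sensible sanity check.
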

\begin{proof}
\cy{We have $\dim_K(\bX_n(G))=\dim_K(u^*\bX_{n-1}(G)+\partial 
\bX_{n-1}(G))+l_n$. Hence, by Proposition \ref{dim}, we obtain our result.}
\end{proof}

\begin{lemma}
\label{l&h}
$(1)$ For $n=0$, we have $l_0 = \dim_K \bX_0(G)$ and $h_0=0$.

$(2)$ For $n=1$, we have $l_1= g-h_1-l_0$.

$(3)$ For all $n \geq 2$, we have $l_n= h_{n-1} - h_n$.
\end{lemma}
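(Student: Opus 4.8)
The plan is to compute the three relations by tracking how dimensions of the relevant spaces evolve with $n$, using the exact sequence~(\ref{les}) together with the combinatorial bookkeeping already established in Corollary~\ref{dimX} and the definitions of $l_n$ and $h_n$.

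First I would handle $(1)$: for $n=0$ the space $u^*\bX_{-1}(G)+\partial\bX_{-1}(G)$ is zero, so $l_0 = \dim_K\bX_0(G) - 0 = \dim_K\bX_0(G)$ directly from the definition of the primitive dimension; and $h_0 = 0$ is simply the stipulated initial value, consistent with $\bI_0 = \{0\}$ since $N^0G$ is trivial.

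Next, for $(3)$ with $n\geq 2$, I would exploit the exact sequence~(\ref{les}), which gives
$$\dim_K\bX_n(G) = \dim_K\bX_0(G) + \dim_K\bI_n,$$
since $\iota^*$ has kernel $\bX_0(G)$ and image exactly $\ker\Delta$, whose dimension is $\dim_K\bX(N^nG) - \dim_K\bI_n = ng - \dim_K\bI_n$ — wait, more carefully, $\dim_K\bX_n(G) = \dim_K\bX_0(G) + (\dim_K\bX(N^nG) - \dim_K\bI_n)$. Hmm, that is not quite what I want; let me instead use the cleaner route. By Corollary~\ref{dimX}, $\dim_K\bX_n(G) - \dim_K\bX_{n-1}(G) = l_0 + l_1 + \cdots + l_{n-1} + l_n$ (telescoping the two expressions). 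On the other hand, applying $(2)$ and the desired $(3)$ inductively, one shows this increment equals $\dim_K\bX_0(G) + (g - h_1 - l_0) + \sum_{i=1}^{n-1}(h_i - h_{i+1}) + l_n$. I will instead derive $(3)$ by directly comparing the exact sequence at level $n$ and level $n-1$: from~(\ref{les}), $\dim_K\bX_n(G) = \dim_K\bX_0(G) + ng - \dim_K\bI_n$ is \emph{false} in general because $\dim\bX(N^nG) = ng$ but $\iota^*$ need not be surjective onto $\ker\Delta$ — actually by exactness of~(\ref{les}) it \emph{is} exactly $\ker\Delta$. So $\dim_K\bX_n(G) = \dim_K\bX_0(G) + ng - \dim_K\bI_n$. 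Then $\dim_K\bX_n(G) - \dim_K\bX_{n-1}(G) = g - (\dim_K\bI_n - \dim_K\bI_{n-1}) = g - h_n$. Combining with the telescoped form $\dim_K\bX_n(G) - \dim_K\bX_{n-1}(G) = l_0 + l_1 + \cdots + l_n$ (from Corollary~\ref{dimX}) yields $l_0 + \cdots + l_n = g - h_n$ for all $n\geq 1$. Subtracting the same identity at $n-1$ (valid for $n-1\geq 1$, i.e.\ $n\geq 2$) gives $l_n = h_{n-1} - h_n$, which is $(3)$.

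Finally, for $(2)$: taking $n=1$ in the identity $l_0 + \cdots + l_n = g - h_n$ gives $l_0 + l_1 = g - h_1$, hence $l_1 = g - h_1 - l_0$, which is $(2)$. The main obstacle — really the only subtle point — is establishing the clean formula $\dim_K\bX_n(G) = \dim_K\bX_0(G) + ng - \dim_K\bI_n$ from~(\ref{les}); this requires knowing $\dim_K\bX(N^nG) = ng$ (Proposition~\ref{Nn}(2)) and that the sequence~(\ref{les}) is exact at every spot, in particular that $\mathrm{image}(\iota^*) = \ker(\Delta)$. Once that is in hand, everything else is the elementary telescoping argument above together with Corollary~\ref{dimX}.
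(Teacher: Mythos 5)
Your proof is correct and follows essentially the same route as the paper: both rest on the dimension count from the four-term exact sequence $0 \to \bX_0(G) \to \bX_n(G) \to \bX_0(N^nG) \to \bI_n \to 0$ together with Corollary \ref{dimX} and Proposition \ref{Nn}. The only (cosmetic) difference is that you telescope to the clean identity $l_0+\cdots+l_n = g-h_n$ and subtract consecutive instances, whereas the paper runs an explicit induction substituting the previously computed $l_i$; the two bookkeeping schemes are equivalent.
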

\begin{proof}
For each $n$, we have the following exact sequence of $K$-vector spaces
\begin{align}
\label{I_n}
 0 \map \bX_0(G) \map \bX_n(G) \stk{\iota}{\longrightarrow} 
\bX_0(N^nG) \stk{\Delta}{\longrightarrow} \bI_n \map 0.
\end{align}
$(1)$: This is clear since $N^0G$ is a point scheme corresponding to the 
identity section.

\noindent
$(2)$: For $n=1$, by Corollary \ref{dimX}, we have $\dim_K(\bX_1(G))=2l_0+l_1, \dim_K(\bX_0(G))=l_0$, $\dim_K(\bX_0(N^1G))=g$ and $\dim_K \bI_1=h_1-h_0=h_1$, hence from the exact sequence, we get 
\beqar
(2l_0+l_1)-l_0+h_1 &=& g \\
l_0+l_1 &=& g-h_1.
\eeqar

\noindent
$(3)$: We will prove the result by induction.
For $n=2$, from (\ref{I_n}) we have 
\beqar
(3l_0+2l_1+l_2)-l_0+(h_2+h_1) & =& 2g.
\eeqar
Substituting $l_0+l_1 = g-h_1$ we obtain
\beqar
 l_2 &=& h_1-h_2.
\eeqar
Assume the result is true for all $n-1$. We want to show
it for $n$. From Corollary \ref{dimX} and (\ref{I_n}) we obtain,
\beqar
((n+1)l_0+nl_1+\cdots +2l_{n-1}+ l_n)- l_0 + (h_1+ \cdots + h_n) &=& ng.
\eeqar
Now substituting $l_1, l_2, \cdots ,l_{n-1}$ with $h_i$s, from the induction 
hypothesis, we obtain the result.
\end{proof}

\begin{lemma}
For all $n\geq 1$, $h_n$ is a (weakly) decreasing function of $n$.
\end{lemma}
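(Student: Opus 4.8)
The plan is to obtain the claim as an immediate consequence of Lemma~\ref{l&h}(3) together with the non-negativity of the primitive dimensions $l_n$.

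First I would record that $l_n \ge 0$ for every $n$. By definition $l_n = \dim_K \bX_n(G) - \dim_K\!\bigl(u^*\bX_{n-1}(G) + \partial\bX_{n-1}(G)\bigr)$, so it suffices to note that $u^*\bX_{n-1}(G) + \partial\bX_{n-1}(G)$ is a $K$-linear subspace of $\bX_n(G)$. The first summand lies in $\bX_n(G)$ because $u^*$ is pullback along $u\colon J^nG\to J^{n-1}G$, and the second summand lies in $\bX_n(G)$ by Lemma~\ref{compo} applied to the prolongation $(u,\partial)\colon J^nG\to J^{n-1}G$ — this is the same fact that was used above to equip $\bX_\infty(G)$ with its $K\{\partial\}$-module structure. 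Hence the difference of dimensions defining $l_n$ is non-negative.

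Then I would invoke Lemma~\ref{l&h}(3): for every $n\ge 2$ one has $l_n = h_{n-1} - h_n$. Combining this with $l_n \ge 0$ yields $h_n \le h_{n-1}$ for all $n\ge 2$, that is, $h_1 \ge h_2 \ge h_3 \ge \cdots$, which is precisely the assertion that $h_n$ is a weakly decreasing function of $n$ on the range $n\ge 1$.

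There is essentially no hard step here; the only point to keep track of is the indexing boundary. Lemma~\ref{l&h}(3) is available only for $n\ge 2$, so the first comparison produced is $h_1\ge h_2$, and no relation is claimed — nor is one true in general, since $h_1$ may be positive — between $h_0 = 0$ and $h_1$; this is consistent with the statement ranging over $n\ge 1$. If one preferred a self-contained argument not citing Lemma~\ref{l&h}, one could instead re-derive the identity $l_n = h_{n-1}-h_n$ from Corollary~\ref{dimX} and the exact sequence used in the proof of Lemma~\ref{l&h}, but quoting Lemma~\ref{l&h}(3) directly is the cleanest route.
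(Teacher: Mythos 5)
Your argument is correct and is exactly the paper's proof: both deduce $h_{n-1}\ge h_n$ from $l_n\ge 0$ combined with the identity $l_n=h_{n-1}-h_n$ of Lemma~\ref{l&h}(3). Your extra care about why $l_n\ge 0$ and about the indexing boundary at $n=1$ is a welcome (and accurate) elaboration of what the paper leaves implicit.
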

\begin{proof}
We know that $l_n \geq 0$ for all $n$ and hence by Lemma \ref{l&h} the result
follows.
\end{proof}

\begin{corollary}
\label{hn=0}
If $h_N = 0$ for some $N$, then 
\begin{enumerate}
\item $h_n=0$ for all $n\geq N$.

\item $l_n=0$ for all $n\geq N+1$.
\end{enumerate}
\end{corollary}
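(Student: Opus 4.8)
The plan is to obtain both statements as immediate consequences of the two monotonicity facts just established — that $l_n \geq 0$ for every $n$ (it is the dimension of a quotient $K$-vector space) and that $(h_n)_{n\geq 1}$ is weakly decreasing — together with the relation $l_n = h_{n-1}-h_n$ for $n \geq 2$ from Lemma \ref{l&h}$(3)$. The one additional ingredient I would flag explicitly is that $h_n \geq 0$ for all $n$: by definition $h_n = \dim_K \bI_n - \dim_K \bI_{n-1}$, and the $\bI_n$ form a weakly increasing chain of subspaces of the finite-dimensional space $\Ext(G,\Ga)$, the inclusions coming from the pullback maps $\bX_0(N^nG) \to \bX_0(N^{n+1}G)$ induced by $N^{n+1}G \to N^nG$, which are compatible with the connecting maps $\Delta$. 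I would also note that the statement is meant for $N \geq 1$, since $h_0 = 0$ holds unconditionally and so carries no information.

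For part $(1)$, assume $h_N = 0$ with $N \geq 1$. Iterating the weakly-decreasing lemma gives $h_n \leq h_N = 0$ for all $n \geq N$, while the remark above gives $h_n \geq 0$; hence $h_n = 0$ for all $n \geq N$.

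For part $(2)$, fix $n \geq N+1$. Then $n \geq 2$ and $n-1 \geq N$, so Lemma \ref{l&h}$(3)$ applies and gives $l_n = h_{n-1} - h_n$, where both terms on the right vanish by part $(1)$; therefore $l_n = 0$.

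I do not anticipate a genuine obstacle here: the argument is pure bookkeeping resting on the already-proved monotonicity of $h_n$ and the nonnegativity of $l_n$. The only points that require a moment's care are the nonnegativity of $h_n$ (so that "weakly decreasing and attaining the value $0$" forces "identically $0$ thereafter") and the minor boundary issues around $n = 2$ and the convention $N \geq 1$.
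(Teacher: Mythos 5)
Your proof is correct and follows essentially the same route as the paper, which simply invokes the relation $l_{n+1}=h_n-h_{n+1}$ together with the monotonicity of $h_n$ and the nonnegativity of $l_n$. Your extra remark that $h_n\geq 0$ (via the increasing chain $\bI_{n-1}\subseteq \bI_n$) is a point the paper leaves implicit but is indeed needed to pass from "weakly decreasing and $\leq 0$" to "identically $0$", so flagging it is a genuine improvement in rigor rather than a departure in method.
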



\begin{proof}
The results  follow immediately from the fact that $l_{n+1}= h_n -h_{n+1}$.
\end{proof}
\begin{lemma}
\label{ee}
For all $n \geq \mup + 1$ we have 
$$\bX_n(G)/u^*\bX_{n-1}(G) \simeq \bX_0(N^n)/u^*\bX_0(N^{n-1}).$$
\end{lemma}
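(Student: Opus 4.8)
The plan is to realize both sides of the asserted isomorphism as the same subquotient of $\bX_0(N^nG)$, via the connecting map $\Delta$ in the exact sequence (\ref{I_n}) and the stabilization of its image $\bI_n$ for large $n$. First I would record that (\ref{I_n}) is natural in $n$: the projection $u$ induces a morphism of short exact sequences of $S$-group schemes from $0 \map N^nG \stk{\iota}{\longrightarrow} J^nG \stk{u}{\longrightarrow} G \map 0$ to $0 \map N^{n-1}G \stk{\iota}{\longrightarrow} J^{n-1}G \stk{u}{\longrightarrow} G \map 0$ (the identity on $G$, the map $u$ on the other two terms), so applying $\Hom(-,\Ga)$ together with naturality of the long exact $\Ext$-sequence shows that the pullback maps $u^*$ commute with both $\iota^*$ and $\Delta$. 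In particular $\bI_{n-1} = \Delta\big(u^*\bX_0(N^{n-1}G)\big) \subseteq \bI_n$. Since by (\ref{I_n}) the map $\iota^*_n$ has kernel $\bX_0(G)$ and image exactly $\ker\Delta_n$, and since $\bX_0(G) \subseteq u^*\bX_{n-1}(G)$, I get that $\iota^*_n$ induces an isomorphism $\bX_n(G)/u^*\bX_{n-1}(G) \isomap \ker\Delta_n / u^*\big(\ker\Delta_{n-1}\big)$, using $\iota^*_n\big(u^*\bX_{n-1}(G)\big) = u^*\big(\iota^*_{n-1}\bX_{n-1}(G)\big) = u^*\big(\ker\Delta_{n-1}\big)$.

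Next I would feed in the hypothesis. For $n \geq \mup+1$ both $n$ and $n-1$ are at least $\mup-1$, so $\dim_K \bI_n = \dim_K\bI_{n-1}$ by the definition of the upper splitting number, and hence $\bI_n = \bI_{n-1}$ by the inclusion above. I then claim that the composite $\ker\Delta_n \inj \bX_0(N^nG) \map \bX_0(N^nG)/u^*\bX_0(N^{n-1}G)$ is surjective with kernel $u^*\big(\ker\Delta_{n-1}\big)$. For the kernel: if $x = u^*y$ lies in $\ker\Delta_n$ then $\Delta_{n-1}(y) = \Delta_n(u^*y) = 0$, so $x \in u^*\big(\ker\Delta_{n-1}\big)$, and the reverse inclusion is immediate. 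For surjectivity: given $z \in \bX_0(N^nG)$ we have $\Delta_n(z) \in \bI_n = \bI_{n-1}$, so $\Delta_n(z) = \Delta_{n-1}(y) = \Delta_n(u^*y)$ for some $y \in \bX_0(N^{n-1}G)$, whence $z - u^*y$ lies in $\ker\Delta_n$ and represents the class of $z$. This yields $\ker\Delta_n / u^*\big(\ker\Delta_{n-1}\big) \isomap \bX_0(N^nG)/u^*\bX_0(N^{n-1}G)$, and composing with the isomorphism of the previous paragraph gives the claim.

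The only substantive input is the equality $\bI_n = \bI_{n-1}$ for $n \geq \mup+1$, which is precisely what the definition of $\mup$ provides once combined with the monotonicity $\bI_{n-1}\subseteq\bI_n$; everything else is diagram chasing. The main point requiring care is the bookkeeping of the several maps written $u^*$ — on $\bX_n(G)$, on $\bX_0(N^nG)$, and the identity on $\Ext(G,\Ga)$ — and verifying that the naturality squares for the connecting homomorphism $\Delta$ genuinely commute, so that the two quotient identifications are induced compatibly by the restriction maps $\iota^*$.
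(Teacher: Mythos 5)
Your proof is correct and takes essentially the same route as the paper: the paper forms the same commutative ladder of exact sequences $0 \to \bX_n(G)/\bX_0(G) \to \bX_0(N^nG) \to \bI_n \to 0$ at levels $n-1$ and $n$ and invokes the snake lemma, with the decisive input being exactly the stabilization $\bI_{n-1}=\bI_n$ for $n\geq \mup+1$ that you isolate. Your explicit chase through $\ker\Delta_n/u^*(\ker\Delta_{n-1})$ is just that snake-lemma argument unpacked.
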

\begin{proof} 
For all $n \geq \mup + 1$, by definition we have $\bI_n=\bI_{n+1}=\cdots$. 
Consider the diagram
$$\xymatrix{
0 \ar[r] & \bX_n(G)/\bX_0(G) \ar[r]^-{\iota^*}& \bX_0(N^nG) \ar[r] & \bI_n 
\ar[r] & 0\\
0 \ar[r] & \bX_{n-1}(G)/\bX_0(G) \ar[u]_{u^*}\ar[r]^-{\iota^*}& \bX_0(N^{n-1}G) 
\ar[u]_{u^*} \ar[r] & \bI_{n-1} \ar[u] \ar[r] & 0.\\
}$$
Then the result follows from snake lemma.
\end{proof}

\begin{proposition}
\label{mlmu}
$(1)$ For all $n \geq m_u+1$, we have $l_n=0$. In other words, there are no 
primitive characters of order greater than $m_u$.

$(2)$ We have $m_l \leq m_u$.
\end{proposition}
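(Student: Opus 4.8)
The plan is to read off both assertions from the combinatorial bookkeeping already in place — Lemma \ref{l&h}, Corollary \ref{hn=0}, Corollary \ref{dimX}, and the defining property of $\mup$ — so that no further geometric input is needed; the proof is essentially an indexing argument.

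\emph{Part (1).} By definition $\mup$ is the least index past which $\dim_K \bI_n$ is constant, so $\dim_K \bI_n = \dim_K \bI_{\mup-1}$ for all $n \geq \mup - 1$, and therefore $h_n = \dim_K \bI_n - \dim_K \bI_{n-1} = 0$ for all $n \geq \mup$. In particular $h_{\mup} = 0$, so Corollary \ref{hn=0}, applied with $N = \mup$, gives $l_n = 0$ for all $n \geq \mup + 1$ — exactly the statement that there is no primitive character of order greater than $\mup$.

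\emph{Part (2).} I would argue by contradiction: assume $\mlow > \mup$, so $\bX_{\mup}(G) = 0$. By Corollary \ref{dimX} we have $\dim_K \bX_{\mup}(G) = (\mup+1)l_0 + \mup\, l_1 + \cdots + 2 l_{\mup - 1} + l_{\mup}$, a sum of non-negative integers with strictly positive coefficients, so its vanishing forces $l_0 = l_1 = \cdots = l_{\mup} = 0$. Combined with part (1) this gives $l_n = 0$ for every $n$, whence $\dim_K \bX_n(G) = 0$ for all $n$ by Corollary \ref{dimX}; that is, $\bX_n(G) = 0$ for all $n$, contradicting $\bX_{\mlow}(G) \neq 0$. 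Hence $\mlow \leq \mup$.

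The only delicate point — essentially the sole obstacle — is the extreme case $\mup = 1$ (equivalently $\bI_n = 0$ for all $n$), in which $h_{\mup - 1} = h_0 = 0$ holds automatically and carries no information, so one must check the argument still closes. It does: when $\mup = 1$ one has $\dim_K \bI_1 = \dim_K \bI_0 = 0$, hence $h_1 = 0$, so Corollary \ref{hn=0} still applies with $N = 1$ and yields $l_n = 0$ for $n \geq 2$; and for part (2), Corollary \ref{dimX} and Lemma \ref{l&h} give $\dim_K \bX_1(G) = 2l_0 + l_1 = l_0 + g - h_1 = l_0 + g \geq g \geq 1$, so $\bX_1(G) \neq 0$ and $\mlow \leq 1 = \mup$. (Throughout we use $g \geq 1$; if $g = 0$ then $\bX_n(G) = 0$ for all $n$, $\mlow$ is undefined, and there is nothing to prove.)
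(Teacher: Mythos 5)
Your proof is correct. Part (1) is essentially the paper's own argument: the definition of $\mup$ forces $h_n=0$ for $n\geq \mup$, and then Lemma \ref{l&h} (equivalently Corollary \ref{hn=0}) kills $l_n$ for $n\geq \mup+1$. For part (2) you diverge from the paper, and to your advantage: the paper disposes of $\mlow\leq\mup$ with a one-line definitional remark ("$\mlow$ is the least number for which a primitive basis exists, $\mup$ the largest for which a primitive subset exists"), which tacitly assumes that a primitive character of order $\leq\mup$ actually exists; your contradiction argument via Corollary \ref{dimX} supplies exactly the missing content, since $\bX_{\mup}(G)=0$ would force $l_0=\cdots=l_{\mup}=0$ and hence, with part (1), the vanishing of all $\bX_n(G)$. (Note also that, by injectivity of $u^*$, $\mlow>\mup$ does give $\bX_{\mup}(G)\hookrightarrow \bX_{\mlow-1}(G)=0$, as you use.) A slightly shorter direct version of the same idea: from Lemma \ref{l&h} one gets $l_0+\cdots+l_{\mup}=g-h_{\mup}=g\geq 1$, so some $l_i>0$ with $i\leq\mup$, whence $\bX_{\mup}(G)\neq 0$ and $\mlow\leq\mup$; this also makes your separate $\mup=1$ check unnecessary, since nothing in the main argument actually degenerates there.
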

\begin{proof}
(1): By the definition of $m_u$, we have $h_{m_u}= h_{m_u+1}= \cdots = 0$. This
implies by Lemma \ref{l&h} that $l_n=0$ for all $n \geq m_u+1$.

(2): By definition, $m_l$ is the least number for which a non-trivial primitive
basis exists, whereas $m_u$ is the largest number for which a primitive subset 
exists. Hence we have $m_l\leq m_u$. 
\end{proof}

Note that as a consequence of Proposition \ref{mlmu},
if $\mathbbm{B}$ primitively generates $\bX_{\infty}(G)$, then $\mathbbm{B}$
can be written as a finite union 
\begin{align}
\mathbbm{B} = \mathbbm{B}_{i_1} \cup \cdots \cup \mathbbm{B}_{i_l},
\end{align}
where $m_l=i_1 < \cdots < i_l = m_u$ and $\mathbbm{B}_{i_j}$ is a primitive
basis for $\bX_{i_j}(G)$ for all $j$.

Now we prove our first main result.

\begin{proof}[Proof of Theorem \ref{mt}]
For any $n$ consider the following sum obtained by applying Lemma \ref{l&h}
$$l_0+ l_1+ \cdots + l_n = g - h_n.$$
By Corollary \ref{hn=0}, for $n$ large enough, $h_n$ is $0$ and hence the sum above is $g$ and
this shows that $\bX_{\infty}( G)$ is generated by $g$ primitive characters.


Note that $\sum h_i \leq r$ and since $h_i$s are a 
weakly decreasing sequence of non-negative integers, we must have $h_i=0$ for
all $i \geq r+1$ and hence $m_u \leq r+1$.
\end{proof}

\begin{corollary}
\label{xprim}
We have
$$\bXp(G)\simeq\bX_{\mup}(G)/\langle\partial \bX_{\mup-1}(G)\rangle .$$
Moreover, $\mathbbm{B}_{i_1} \cup \cdots \cup
\mathbbm{B}_{i_l}$ is a $K$-basis for $\bXp(G)$ and $\dim_K \bXp(G)= g$, where $\mathbbm{B}_{i_j}$ is a primitive subset of $\bX_{i_j}(G)$ and $m_l=i_1<...<i_l=m_u$.
\end{corollary}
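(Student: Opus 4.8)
The plan is to assemble the corollary from the structural facts already established, chiefly Proposition \ref{mlmu}, Lemma \ref{l&h}, and the counting identity in the proof of Theorem \ref{mt}. First I would unwind the definition $\bXp(G) = \varinjlim \bX_n(G)/\langle \partial \bX_{n-1}(G)\rangle$. By Proposition \ref{mlmu}(1), for all $n \geq m_u+1$ the primitive dimension $l_n$ vanishes, and by Lemma \ref{ee} the natural map $\bX_n(G)/u^*\bX_{n-1}(G) \to \bX_{n+1}(G)/u^*\bX_n(G)$ is an isomorphism in that range; combined with the fact that every element of $\bX_n(G)$ lies in $u^*\bX_{n-1}(G) + \partial\bX_{n-1}(G)$ once $n > m_u$ (which is exactly $l_n = 0$), the transition maps in the directed system defining $\bXp(G)$ become isomorphisms for $n \geq m_u$. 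Hence the colimit stabilizes and $\bXp(G) \simeq \bX_{m_u}(G)/\langle \partial \bX_{m_u-1}(G)\rangle$, which is the first assertion.

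Next I would identify a basis. Write $\mathbbm{B} = \mathbbm{B}_{i_1} \cup \cdots \cup \mathbbm{B}_{i_l}$ with $m_l = i_1 < \cdots < i_l = m_u$, where each $\mathbbm{B}_{i_j}$ is a primitive subset of $\bX_{i_j}(G)$ of cardinality $l_{i_j}$. I claim the images of the elements of $\mathbbm{B}$ in $\bX_{m_u}(G)/\langle\partial\bX_{m_u-1}(G)\rangle$ are $K$-linearly independent and span. For spanning: given any $\Theta \in \bX_{m_u}(G)$, since $\mathbbm{B}_{m_u}$ is a primitive basis of $\bX_{m_u}(G)/(u^*\bX_{m_u-1}(G)+\partial\bX_{m_u-1}(G))$, after subtracting a combination of $\mathbbm{B}_{m_u}$ and an element of $\partial\bX_{m_u-1}(G)$ we may assume $\Theta \in u^*\bX_{m_u-1}(G)$; descending inductively and using that each $\mathbbm{B}_{i_j}$ generates the corresponding quotient, one writes $\Theta$ modulo $\langle\partial\bX_{m_u-1}(G)\rangle$ as a $K$-combination of $\mathbbm{B}$. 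For independence: a relation among the images in $\bX_{m_u}(G)/\langle\partial\bX_{m_u-1}(G)\rangle$ means a $K$-combination of the $\partial^{m_u - i_j}$-shifts (and lower shifts) of the $\Theta_{i_j s}$ lies in $\langle\partial\bX_{m_u-1}(G)\rangle \subseteq u^*\bX_{m_u-1}(G)+\partial\bX_{m_u-1}(G)$; this is precisely the kind of relation ruled out by Proposition \ref{3}, so all coefficients vanish. Therefore $\mathbbm{B}$ is a $K$-basis of $\bXp(G)$.

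Finally, the dimension count: $\dim_K \bXp(G) = |\mathbbm{B}| = l_{i_1} + \cdots + l_{i_l} = l_0 + l_1 + \cdots + l_{m_u}$, since $l_n = 0$ for $n \notin \{i_1,\dots,i_l\}$ in the range $0 \leq n \leq m_u$ and for all $n > m_u$ by Proposition \ref{mlmu}(1). By the identity $l_0 + l_1 + \cdots + l_n = g - h_n$ from Lemma \ref{l&h} (used in the proof of Theorem \ref{mt}), and since $h_{m_u} = 0$ by definition of the upper splitting number together with Corollary \ref{hn=0}, this sum equals $g$. Hence $\dim_K \bXp(G) = g$.

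I expect the main obstacle to be the linear independence argument in the second step: one must be careful that the relation $\sum_j \sum_s (\text{shifts of } \Theta_{i_j s}) \in \langle \partial \bX_{m_u-1}(G)\rangle$ genuinely falls under the hypothesis of Proposition \ref{3}, i.e. that membership in $\langle\partial\bX_{m_u-1}(G)\rangle$ can be absorbed into the $S_{m_u}(\mathbbm{B}_{i})$-span so that Proposition \ref{3} applies verbatim. The leading-linear-term bookkeeping of Lemma \ref{partlead} is the tool that makes this rigorous, and the rest is formal.
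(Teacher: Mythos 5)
Your proposal is correct and follows essentially the same route as the paper: both identify $\bXp(G)$ with $\bX_{\mup}(G)/\langle\partial \bX_{\mup-1}(G)\rangle$ by showing the quotients stabilize once the primitive dimensions $l_n$ vanish, exhibit $\mathbbm{B}_{i_1}\cup\cdots\cup\mathbbm{B}_{i_l}$ as a spanning set via the $S_n(\mathbbm{B}_i)$ bookkeeping, and deduce $\dim_K\bXp(G)=g$ from the identity $l_0+\cdots+l_n=g-h_n$ used in the proof of Theorem \ref{mt}. Your explicit linear-independence check via Proposition \ref{3} is a point the paper leaves implicit, and it addresses exactly the right concern.
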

\begin{proof}
For all $n \geq \mup$,
$S_n(\mathbbm{B}_{i_1}) \cup \cdots \cup S_n(\mathbbm{B}_{i_l})$
generates $\bX_n(G)$ as a $K$-vector space. Whereas,
$(S_n(\mathbbm{B}_{i_1})\backslash \mbB_{i_1}) \cup \cdots 
\cup (S_n(\mathbbm{B}_{i_l})\backslash \mbB_{i_l})$
generates $\partial \bX_{n-1}(G)$ as a $K$-vector space. Therefore
$\bX_n(G)/ \partial \bX_{n-1 }(G)$ is generated by $\mbB_{i_1} \cup \cdots 
\cup \mbB_{i_l}$ as a $K$-vector space for all $n \geq \mup$ and hence in particular
$\bXp(G) \simeq \bX_{\mup}(G)/\langle\partial \bX_{\mup -1}(G)\rangle$.
The dimension of $\bXp(G)$ is $g$ follows from Theorem \ref{mt}.
\end{proof}

\begin{corollary}
\label{mell}
If $r=1$, then $\mlow= \mup = m$ and $\bXp(G) \simeq \bX_m(  G)$.
\end{corollary}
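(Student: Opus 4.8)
The plan is to derive the corollary from the numerical relations of this section together with Corollary \ref{xprim}. I would begin by noting that the two conclusions are equivalent. By Corollary \ref{xprim} we always have $\bXp(G) \simeq \bX_{\mup}(G)/\langle\partial\bX_{\mup-1}(G)\rangle$, and by Lemma \ref{partlead} the operator $\partial$ sends a character of strict order $k$ to one of strict order $k+1$; hence $\langle\partial\bX_{\mup-1}(G)\rangle$ is zero precisely when $\bX_{\mup-1}(G)=0$, i.e.\ precisely when $\mup\le\mlow$. Since $\mlow\le\mup$ always holds (Proposition \ref{mlmu}), it suffices to prove $\mlow=\mup$ under the hypothesis $r=1$; writing $m:=\mlow=\mup$ then gives $\bXp(G)\simeq\bX_m(G)$ for free.

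Next I would feed in the hypothesis. Because $\sum_{i\ge1}h_i\le r=1$ and $(h_i)_{i\ge1}$ is a weakly decreasing sequence of non-negative integers, either $h_i=0$ for every $i\ge1$, or else $h_1=1$ and $h_i=0$ for all $i\ge2$. Plugging this into Lemma \ref{l&h} (so $l_0=\dim_K\bX_0(G)$, $l_1=g-h_1-l_0$, and $l_n=h_{n-1}-h_n$ for $n\ge2$) determines all the primitive dimensions: in the first case $l_n=0$ for $n\ge2$, and in the second $l_2=1$ with $l_n=0$ for $n\ge3$. Combining this with the telescoping identity $l_0+\cdots+l_n=g-h_n$ (again Lemma \ref{l&h}), the $g$ primitive characters of $\bX_\infty(G)$ are supported in levels $\{0,1\}$ or $\{0,1,2\}$; moreover Theorem \ref{mt} gives $\mup\le r+1=2$.

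The decisive step — and the one I expect to be the main obstacle — is to show that these primitives are concentrated at a single level, equivalently that $\bX_{\mup-1}(G)=0$. For this I would compute $\dim_K\bX_{\mup-1}(G)$ via Corollary \ref{dimX}, $\dim_K\bX_n(G)=(n+1)l_0+nl_1+\cdots+l_n$, and force it to vanish using the values of $l_0,l_1$ found above: when $h_1=1$ one has $\mup=2$ and must exclude characters of order $\le1$, which comes down to $l_0=l_1=0$, and when $h_1=0$ one has $\mup=1$ and must exclude $\bX_0(G)\neq0$. This is the only point at which the bare inequality $r=1$ has to be upgraded to a structural statement about $G$, by playing the exact sequence (\ref{les}) against $\dim_K\bI_n\le r$; once it is in place, $m:=\mlow=\mup$ and $\bXp(G)\simeq\bX_m(G)/\langle\partial\bX_{m-1}(G)\rangle=\bX_m(G)$ follow at once from the first paragraph.
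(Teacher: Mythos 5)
Your first paragraph is a correct and clean reduction: since $\partial$ raises the order of the leading term (Lemma \ref{partlead}), it is injective on characters, so $\langle\partial\bX_{\mup-1}(G)\rangle=0$ exactly when $\bX_{\mup-1}(G)=0$, i.e.\ when $\mlow\geq\mup$; combined with $\mlow\leq\mup$ from Proposition \ref{mlmu}, both conclusions of the corollary are equivalent to $\mlow=\mup$. The numerology in your second paragraph ($h_1\in\{0,1\}$, the resulting values of $l_n$, and $\mup\leq r+1=2$) is also fine. But the proof stops exactly where the content is: you never establish $\bX_{\mup-1}(G)=0$ --- equivalently $l_0=0$ (and also $l_1=0$ in the case $h_1=1$) --- you only record that this is what remains and express the expectation that it will follow from playing (\ref{les}) against $\dim_K\bI_n\leq r$. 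That vanishing \emph{is} the corollary, so as written this is a plan rather than a proof.

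Your instinct that the missing step requires a structural input about $G$ beyond the bare inequality $r=1$ is correct, and this is why no amount of manipulation of (\ref{les}) alone will close the gap: for $G=E\times\bb{G}_m$ with $E$ a non-isotrivial elliptic curve one has $r=\dim_K\Ext(G,\Ga)=1$ and $\bX_0(G)=0$, yet $\bX_1(G)\neq0$ (the logarithmic derivative on the $\bb{G}_m$-factor), so $\mlow=1$, while $l_0=0$, $l_1=1$ give $h_1=g-l_0-l_1=1$ and hence $\mup=2$. The paper's own argument is a case split on $\mlow\in\{1,2\}$: when $\mlow=2$, exactness of (\ref{les}) at $n=1$ with $\bX_0(G)=\bX_1(G)=0$ forces $h_1=\dim_K\bX_0(N^1G)=g$, which together with $h_1\leq r=1$ pins down $g=1$ and $h_2=0$; when $\mlow=1$, the claim $h_1=0$ again only follows from $l_1=g-h_1>0$ and $h_1\leq1$ once $g=1$. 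In other words the statement is really about the case $g=1$ (in effect, elliptic curves), and any correct completion of your argument has to use that, not just $r=1$; the missing step cannot be filled in at the level of generality at which you (and the corollary) state it.
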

\begin{proof}
If $\mlow =1$, then note that $h_1=0$ by Corollary \ref{hn=0}. Since $h_n$ is a weakly decreasing
function in $n$, we have $h_0 = h_1= h_2 = \cdots = 0$. Therefore the rank of
$\bI_n$ is $0$ for all $n \geq 0$ and hence $\mup =1$.

If $\mlow=2$, then note that $h_1=1$ since $\Delta:\Hom(N^1,\Ga) \map 
\Ext(G,\Ga)$ is injective. But since $r=1$, we have
$h_2=h_3= \cdots = 0$. Therefore $\rk \bI_i$ is constant for all $i\geq 1$
and hence $\mup-1=1$ that is $\mup=2$.
\end{proof}

\section{Kernel of differential characters}
\label{Kernel-Scheme}
\cblue
The main aim of this section is to prove our Theorem \ref{K(G)}. The precise 
statement proved is Theorem \ref{K(G)-2}.
Before we do that, we need to set up the objects and the morphisms that we 
will be required to consider.

\cblack

Let $\mathbbm{B}=\{\Theta_1,\cdots,\Theta_g\} \subset \bX_{\mup}(G)$ be such 
that their images form an
$K$-basis of $\bXp(G)$ where the strict orders of $\Theta_i$s are $o_i$s respectively and $m=m_u:=\max~\{o_i|~i=1,\cdots, g\}.$ Without loss of generality, we may assume $o_1\leq o_2\leq\cdots\leq o_g=m.$ Consider the set $S=\{\tilde{\Theta}_1,\cdots,\tilde{\Theta}_g\}$ where $\tilde{\Theta}_i=\partial^{m-o_i}\Theta_i$ and define the map
$$[\tilde{\Theta}]_m:J^mG\rightarrow V:=\Ga^g$$ as $$[\tilde{\Theta}]_m(\bx):=(\tilde{\Theta}_1(\bx),\cdots, \tilde{\Theta}_g(\bx)).$$
Consider the induced map of prolongation sequences,
\begin{equation}
\label{char}
 \xymatrix{
 J^{m+2}G \ar[rr]^{[\tilde{\Theta}]_{m+2}}\ar[d]& & J^2V\ar[d]\\
 J^{m+1}G \ar[rr]^{[\tilde{\Theta}]_{m+1}}\ar[d] & &JV\ar[d]\\
 J^m G \ar[rr]^{[\tilde{\Theta}]_m}& & V
 }
\end{equation} 
\subsection{Coordinate representation of differential characters}
We will now discuss the representation of our differential characters in terms
of the local \'{e}tale coordinate functions $\bx, \Del(\bx),\dots , \Del^n(\bx)$.
Let 
$$\Theta_i=A_{io_i}\Del^{o_i}(\bx)+\cdots+A_{i1}\Del(\bx)+A_{i0}\bx+f.$$ 
for all $i=1,\dots g$.
Then by Lemma \ref{1} we have 
$$L(\tilde{\Theta}_i):=L(\partial^{n-o_i}\Theta_i)= A_{io_i}.\Del^m(\bx).$$ 

\begin{lemma}
\label{prim_mat}
Let $\bb{B}= \{\Theta_1,\cdots, \Theta_g\}$ be as above such that 
$L(\Theta_i)=A_{io_i}\Del^{o_i}\bx$ for all $i=1,\cdots, g$. Then the row vectors
$\{A_{io_i}\}_{i=1}^g$ are $K$-linear independent. In other words 
$A:=(A_{1o_1}\cdots A_{go_g})^t\in Mat_{g\times g}(K)$ is invertible.
\end{lemma}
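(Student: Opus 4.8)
The plan is to deduce the linear independence of the leading coefficient vectors $\{A_{io_i}\}_{i=1}^g$ from the fact that the images of $\Theta_1,\dots,\Theta_g$ form a $K$-basis of $\bXp(G)$, together with the leading-term bookkeeping we have already established in Lemma \ref{partlead} and Proposition \ref{first}. The key point is that the vectors $A_{io_i}$ control the leading linear terms of the shifted characters $\tTheta_i = \partial^{m-o_i}\Theta_i$, all of which now live in the single space $\bX_m(G)$.

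First I would record that, by Lemma \ref{partlead} (or Lemma \ref{1}), $L(\tTheta_i) = L(\partial^{m-o_i}\Theta_i) = A_{io_i}\,\Del^m(\bx)$ for every $i$, so all $g$ shifted characters have leading linear term of the same order $m$. Suppose, for contradiction, that $\sum_{i=1}^g \gamma_i A_{io_i} = 0$ for scalars $\gamma_i \in K$ not all zero. Then the character $\Psi := \sum_{i=1}^g \gamma_i \tTheta_i \in \bX_m(G)$ has vanishing leading linear term of order $m$, which by the description (\ref{fullexp}) means $\Psi \in \bX_{m-1}(G) + (\bx)\Ou(J^mU)$; but a differential character has no term in $(\bx)$ beyond what (\ref{fullexp}) already accounts for, so in fact $\Psi \in u^*\bX_{m-1}(G)$. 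Hence the image of $\Psi = \sum_i \gamma_i \partial^{m-o_i}\Theta_i$ in $\bX_m(G)/\langle\partial\bX_{m-1}(G)\rangle$ is zero. On the other hand, grouping the $\Theta_i$ by their strict orders $o_i$, the terms with $o_i < m$ already lie in $\partial\bX_{m-1}(G)$, so modulo $\langle\partial\bX_{m-1}(G)\rangle$ the relation reduces to $\sum_{i:\,o_i = m} \gamma_i \Theta_i \equiv 0$. By Corollary \ref{xprim} the images of the $\Theta_i$ with $o_i = m$ are part of a $K$-basis of $\bXp(G) \simeq \bX_m(G)/\langle\partial\bX_{m-1}(G)\rangle$, forcing $\gamma_i = 0$ for all $i$ with $o_i = m$.

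It then remains to kill the lower-order coefficients $\gamma_i$ with $o_i < m$ as well. Having shown all the top-order $\gamma_i$ vanish, the relation $\sum_i \gamma_i A_{io_i} = 0$ now only involves the $\Theta_i$ of strict order $< m$, and I would rerun the same argument at the next order down: apply Lemma \ref{1} to see that $\sum_{i:\,o_i<m}\gamma_i\tTheta_i$ having order $< m$ forces a relation in $\bX_{m'}(G)/\langle\partial\bX_{m'-1}(G)\rangle$ among the $\Theta_i$ of strict order $m' = \max\{o_i : \gamma_i \neq 0\}$, contradicting the basis property of $\bXp(G)$ unless those $\gamma_i$ vanish too. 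Iterating downward through the finitely many distinct values of $o_i$ gives $\gamma_i = 0$ for all $i$, so $\{A_{io_i}\}$ is $K$-linearly independent and the $g\times g$ matrix $A$ is invertible.

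The main obstacle is the bookkeeping in the middle step: one must argue carefully that vanishing of the order-$m$ leading linear term of a genuine differential character $\Psi$ actually places $\Psi$ in $u^*\bX_{m-1}(G)$ and not merely in $\bX_{m-1}(G) + (\bx)\Ou(J^mU)$ — that is, that the ``$f \in (\bx)$'' remainder in (\ref{fullexp}) cannot itself contribute a genuine new order-$m$ character. This is exactly the content packaged by the exact sequence (\ref{io}) and Proposition \ref{Nn}: $\iota^*\Psi$ is determined by the coefficient matrices $A_1,\dots,A_m$, and if $A_m = 0$ then $\iota^*\Psi$ factors through $N^{m-1}G$, whence $\Psi$ is a pullback. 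Once this translation between ``leading coefficient'' and ``strict order'' is made precise, the rest is the downward induction on $o_i$ sketched above, which is routine given Lemma \ref{partlead}, Lemma \ref{1}, and Corollary \ref{xprim}.
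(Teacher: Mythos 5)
Your overall strategy --- shift every $\Theta_i$ up to order $m$, observe that a vanishing linear combination of the leading coefficients forces $\Psi=\sum_i\gamma_i\tTheta_i$ to drop to a pullback from level $m-1$, and then run a downward induction on the strict orders --- is the same mechanism the paper uses (it simply cites Proposition \ref{3}, whose proof is exactly this argument). Your final paragraph, identifying the translation between ``vanishing order-$m$ leading coefficient'' and ``lies in $u^*\bX_{m-1}(G)$'' via the exact sequence (\ref{io}), is correct and is in fact a point the paper leaves implicit.

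However, there is a genuine gap in the middle step. From $\Psi\in u^*\bX_{m-1}(G)$ you conclude that the image of $\Psi$ in $\bX_m(G)/\langle\partial\bX_{m-1}(G)\rangle$ is zero. That does not follow: $u^*\bX_{m-1}(G)$ is not contained in $\langle\partial\bX_{m-1}(G)\rangle$ --- a pullback of a lower-order character is in general not a derivative of one (indeed, in $\bXp(G)=\varinjlim\bX_n(G)/\langle\partial\bX_{n-1}(G)\rangle$ the transition maps are precisely the $u^*$'s, so pullbacks survive to nonzero classes). What your relation actually yields is $\sum_{o_i=m}\gamma_i\Theta_i\in u^*\bX_{m-1}(G)+\partial\bX_{m-1}(G)$, and to kill the top-order $\gamma_i$ you must quotient by this larger subspace and invoke the hypothesis that $\mbB_m=\{\Theta_i:\, o_i=m\}$ is a \emph{primitive subset} of $\bX_m(G)$ --- not merely that the images of the $\Theta_i$ form a basis of $\bXp(G)$. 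The distinction is not cosmetic: if $\Theta_1$ has strict order $o_1<m$ and $\Theta_g$ has strict order $m$, then replacing $\Theta_1$ by $\Theta_1+\Theta_g$ still gives a set whose images form a basis of $\bXp(G)$, yet two of its elements now have the same leading coefficient $A_{go_g}$ at order $m$, so the matrix $A$ is singular and the lemma fails for that set. So the argument needs (as the paper's own proof silently assumes when it says ``$\mbB$ forms a primitive subset'') that each $\mbB_j$ is a primitive subset for $\bX_j(G)$; with that hypothesis and the corrected quotient $\bX_m(G)/(u^*\bX_{m-1}(G)+\partial\bX_{m-1}(G))$, your downward induction goes through and essentially reproduces the proof of Proposition \ref{3}.
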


\begin{proof}
Since $\bb{B}$ forms a primitive subset of $\bX_\infty(G)$, by Proposition 
\ref{3}, the subset $\{\tilde{\Theta}_1,\dots ,\tilde{\Theta}_g\}$ is 
$K$-linearly independent inside $\bX_m(G)$. Since for all $i=1, \dots , g$,
$L(\tilde{\Theta}_i) = A_{io_i} \Del^m(\bx)$, we must have that 
$\{A_{io_i}\}_{i=1}^g$ are $K$-linearly independent and hence we obtain our 
desired result.
\end{proof}

For all $n$, let $H^nG:= \mb{Ker}(u:J^nG\rightarrow J^{n-1}G)$ satisfying 
the following short exact sequence of group schemes
$$0\rightarrow H^nG\xrightarrow{\iota}J^nG\xrightarrow{u} J^{n-1}G\rightarrow 0.$$
Then note that $H^nG$ is a vectorial group isomorphic to $\hG^g$.
\begin{proposition}
\label{leadterm}
Given $\Theta\in\bX_n(G),$ such that the strict order of $\Theta$ is $n$, consider $\iota^*\Theta\in \Hom(H^n G,\Ga).$ Then $\iota^*\Theta=L(\Theta)$.
\end{proposition}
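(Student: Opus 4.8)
The plan is to reduce the statement about the character $\Theta$ to the explicit coordinate expression \eqref{fullexp}. Recall $H^nG = \mathrm{Ker}(u:J^nG \to J^{n-1}G)$, which by the discussion preceding Proposition \ref{Nn} is the vector group with coordinates $\Del^n(\bx)$ alone; indeed, in the \'etale coordinates $\bx, \Del(\bx),\dots,\Del^n(\bx)$ on $J^nU$, the subscheme $H^nG$ is cut out by the ideal $(\bx,\Del(\bx),\dots,\Del^{n-1}(\bx))$, and the closed immersion $\iota: H^nG \inj J^nG$ corresponds on coordinate rings to the $K$-algebra map sending $\Del^j(\bx) \mapsto 0$ for $j < n$ and fixing $\Del^n(\bx)$. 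Thus $\iota^*$ on functions is precisely "set $\bx = \Del(\bx) = \cdots = \Del^{n-1}(\bx) = 0$ and keep $\Del^n(\bx)$."

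Now I would apply this to the expression \eqref{fullexp} for $\Theta$: since the strict order of $\Theta$ is $n$, we may write
$$
\Theta(\bx,\Del(\bx),\dots,\Del^n(\bx)) = A_n.\Del^n(\bx) + A_{n-1}.\Del^{n-1}(\bx) + \cdots + A_1.\Del(\bx) + A_0.\bx + f,
$$
with $A_n \neq 0$ (strict order $n$) and $f \in (\bx) \subset \Ou(J^nU)$. Applying $\iota^*$, every term $A_j.\Del^j(\bx)$ with $j<n$ vanishes, the term $A_0.\bx$ vanishes, and $f \in (\bx)$ vanishes as well since $f$ lies in the ideal generated by $\bx$. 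What survives is exactly $A_n.\Del^n(\bx) = L(\Theta)$ by the definition of the leading linear term. Hence $\iota^*\Theta = L(\Theta)$ as an element of $\Hom(H^nG,\Ga)$.

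The only genuine point to be careful about is that $\iota^*\Theta$ should be read as a group-scheme morphism $H^nG \to \Ga$, i.e. as a $K$-linear character, rather than merely as a function; but since $A_n.\Del^n(\bx)$ is already $K$-linear in the coordinates of the vector group $H^nG \simeq \hG^g$, and $\iota$ is a morphism of group schemes, $\iota^*\Theta$ is automatically additive. I expect the main (minor) obstacle to be bookkeeping: making sure that the identification $H^nG$ with the $\Del^n(\bx)$-coordinate subspace, and the description of $\iota^*$ on the coordinate ring, is stated cleanly — this is where one must invoke the \'etale-coordinate computation $J^nU \simeq U \times_{\Ha} J^n\Ha$ recalled earlier, together with the fact that $H^nG$ is the kernel of $u:J^nG \to J^{n-1}G$ rather than of $u:J^nG \to G$. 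Once that identification is in place, the statement is immediate from \eqref{fullexp}.
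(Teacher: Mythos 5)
Your proposal is correct and is essentially identical to the paper's own proof: both substitute the coordinate expression \eqref{fullexp} for $\Theta$ and observe that $\iota^*$ (restriction to $H^nG$, i.e.\ setting $\bx,\Del(\bx),\dots,\Del^{n-1}(\bx)$ to zero) kills every term except $A_n.\Del^n(\bx)=L(\Theta)$, including $f\in(\bx)$. Your extra remarks on identifying $H^nG$ with the $\Del^n(\bx)$-coordinate subspace and on additivity are just the details the paper leaves implicit.
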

\begin{proof}
In terms of local coordinate functions we have
$$\Theta=A_n\Del^{n}(\bx)+A_{n-1}\Del^{n-1}(\bx)+\cdots +A_0\bx+f,$$ where $f\in(\bx)$. Then $$\iota^*\Theta=\Theta(\Del^n(\bx),0,\cdots,0)=A_n\Del^n(\bx)$$ and we are done.
\end{proof}

Consider for all $n\geq m$ the map in equation (\ref{char}) 
$$[\tilde{\Theta}]_n:J^nG\longrightarrow J^{n-m}V.$$ 
is given by
$$[\tilde{\Theta}]_n=(u^*\tTheta_1,\cdots,u^*\tTheta_g,\cdots,\partial^{n-m}\tTheta_1,\cdots,\partial^{n-m}\tTheta_g).$$

Consider the following diagram of group schemes
\begin{equation}
\label{Kernels}
\xymatrix{
 0\ar[d] & & 0\ar[d] & \\
H^{n+1}G\ar[rr]^{\iota^*[\tTheta]_{n+1}}\ar[d] & &H^{n+1-m } V\ar[d] & \\
J^{n+1}G\ar[rr]^{[\tTheta]_{n+1}}\ar[d]_u & & J^{n+1-m}V\ar[r]\ar[d]_u & 0\\
J^nG\ar[rr]^{[\tTheta]_{n}}\ar[d] &  & J^{n-m} V\ar[r]\ar[d]& 0\\
0 & & 0 & \\
}
\end{equation}
where the map $\iota^*[\tTheta]_n: H^{n+1}G \map J^{n+1-m}V$ naturally factors 
through $H^{n+1-m}V$ as shown above for all $n+1 \geq m$. 

\begin{proposition}
\label{Char_Matrix}
\begin{enumerate}
\item For all $n\geq m$, $$\iota^{*}[\tTheta]_n:H^nG \simeq \Ga^g\longrightarrow H^{n-m}V\simeq \Ga^g $$ is given by $$\iota^*[\tTheta]_n=A,$$ where 
$A:=(A_{1o_1}\cdots A_{go_g})^t.$
\item By Lemma \ref{prim_mat}, $A$ is invertible, that implies $$\iota^*[\tTheta]_n:H^nG\longrightarrow H^{n-m}V$$ is an isomorphism.
In particular, $\iota^* [\tTheta]_m:N^mG \map V$ is a surjection.

\item The map $[\tTheta]_n:J^nG\longrightarrow J^{n-m}V$ is surjective.
\end{enumerate}
\end{proposition}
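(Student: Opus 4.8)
The plan is to prove the three parts in the order stated, since (2) is a formal consequence of (1) together with Lemma \ref{prim_mat}, and (3) follows from (1) and (2) by a diagram chase. For part (1), I would fix $n \geq m$ and use the coordinate description of the maps. Recall that $H^nG \simeq \hG^g$ with coordinate functions $\Del^n(\bx)$, and $H^{n-m}V \simeq \hG^g$ with coordinate functions $\Del^{n-m}(\bw)$, where $\bw = (w_1,\dots,w_g)$ are the standard coordinates on $V = \Ga^g$. By the formula for $[\tTheta]_n$ displayed just before the proposition, the $(n-m)$-th block of components of $[\tTheta]_n$ is $(\partial^{n-m}\tTheta_1,\dots,\partial^{n-m}\tTheta_g)$. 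Restricting along $\iota$ (i.e. setting $\bx = \Del(\bx) = \cdots = \Del^{n-1}(\bx) = 0$) and applying Proposition \ref{leadterm} to each $\partial^{n-m}\tTheta_i$, whose strict order is exactly $n$ since $L(\partial^{n-m}\tTheta_i) = L(\partial^{n-o_i}\Theta_i) = A_{io_i}\Del^n(\bx) \neq 0$ by Lemma \ref{partlead} and Lemma \ref{prim_mat}, we get $\iota^*(\partial^{n-m}\tTheta_i) = L(\partial^{n-m}\tTheta_i) = A_{io_i}\Del^n(\bx)$. The lower blocks $u^*\partial^j\tTheta_i$ for $j < n-m$ lie in $\Ou(J^{n-1}G)$ and hence pull back to $0$ under $\iota^*$, so they contribute nothing to the map $H^nG \to H^{n-m}V$. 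Stacking the $g$ components, the induced linear map $H^nG \to H^{n-m}V$ is precisely multiplication by the matrix $A = (A_{1o_1}\cdots A_{go_g})^t$, which is part (1).

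For part (2), Lemma \ref{prim_mat} gives that $A \in \mathrm{Mat}_{g\times g}(K)$ is invertible, so the linear map $\iota^*[\tTheta]_n : H^nG \to H^{n-m}V$ of vector groups is an isomorphism. For the ``in particular'' statement, I would take $n = m$: then $H^mG$ sits inside $N^mG$ (it is the deepest piece of the filtration of $N^mG$ by the kernels of the successive projections), and $H^0V = V$, so the composite $N^mG \twoheadrightarrow H^mG \xrightarrow{\ \sim\ } H^0V = V$ — more precisely the restriction of $\iota^*[\tTheta]_m$ to $N^mG$ — is surjective onto $V$ because it is already surjective on the subgroup $H^mG$. (Here one uses that $[\tTheta]_m : J^mG \to V$ carries $N^mG$ into $V$ by construction of the diagram \eqref{Kernels}.)

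For part (3), I would argue by descending/ascending through the tower in diagram \eqref{Kernels}. The map $[\tTheta]_m : J^mG \to V$ is surjective: its image is a subgroup scheme of $V = \Ga^g$ containing $\iota^*[\tTheta]_m(N^mG) = V$ by part (2), hence equals $V$. Now I proceed by induction on $n \geq m$, assuming $[\tTheta]_n : J^nG \to J^{n-m}V$ is surjective. In the commutative diagram \eqref{Kernels} the rows $0 \to H^{n+1}G \to J^{n+1}G \to J^nG \to 0$ and $0 \to H^{n+1-m}V \to J^{n+1-m}V \to J^{n-m}V \to 0$ are exact, the left vertical map $\iota^*[\tTheta]_{n+1}$ is surjective (indeed an isomorphism) by part (2), and the right vertical map $[\tTheta]_n$ is surjective by the inductive hypothesis; the five lemma (or a direct diagram chase) then yields that the middle map $[\tTheta]_{n+1} : J^{n+1}G \to J^{n+1-m}V$ is surjective. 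This completes the induction and hence the proof.

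The main obstacle I anticipate is part (1): one must be careful that, after pulling back along $\iota$, the only surviving term of $\partial^{n-m}\tTheta_i$ is its leading linear term $A_{io_i}\Del^n(\bx)$ and nothing from the ``$f \in (\bx)$'' tail or the intermediate-order linear terms, which is exactly where Lemma \ref{deltaT}/\ref{partlead} and Proposition \ref{leadterm} are needed, and that the strict order of $\partial^{n-m}\tTheta_i$ is genuinely $n$ so that Proposition \ref{leadterm} applies. Once the coordinate bookkeeping in (1) is pinned down, parts (2) and (3) are routine.
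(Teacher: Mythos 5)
Your proposal is correct and follows essentially the same route as the paper: part (1) by restricting the coordinate expression to $H^nG$ and invoking Proposition \ref{leadterm} together with Lemma \ref{partlead} (the paper writes $\iota^*[\tTheta]_n=(L(\partial^{n-m}\tTheta_1),\dots,L(\partial^{n-m}\tTheta_g))=A\cdot\Del^n(\bx)$ with exactly this justification implicit), part (2) from the invertibility of $A$ via Lemma \ref{prim_mat} and the inclusion $H^mG\hookrightarrow N^mG$, and part (3) by induction on $n$ with a diagram chase in \eqref{Kernels}. Your write-up is in fact slightly more careful than the paper's on why the lower blocks $u^*\partial^j\tTheta_i$ die on $H^nG$ and why the strict order of $\partial^{n-m}\tTheta_i$ is exactly $n$.
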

\begin{proof}
For all $n \geq m$, the pull-back map $\iota^*[\tTheta]_n: H^nG \map H^{n-m}V$
is given by 
\beqar
\iota^* [\tTheta]_n &=& (L(\partial^{n-m}\tTheta_1), \cdots , 
L(\partial^{n-m}\tTheta_g)) \\
& = & A. \Del^n(\bx) 
\eeqar
where $A = (A_{1o_1} \cdots A_{go_g})^t$ and this proves $(1)$.

By Lemma \ref{prim_mat}, $A$ is an invertible matrix and hence this proves
the first part of $(2)$. We have $H^mG \inj N^mG$ and since $\iota^*[\tTheta]_m
:H^mG \map V$ is an isomorphism implies that $\iota^*[\tTheta]_m: N^mG \map
V$ is a surjection. 

We will show $(3)$ by induction on $n$. For $n = m$, consider the map 
$\iota^*[\tTheta]_m: H^m G \map V$. By $(2)$ this morphism is invertible and
therefore implies that $[\tTheta]_m: J^mG \map V$ is a surjection. Now suppose
the result is true for $n$. Then in diagram (\ref{Kernels}), by $(2)$,
$\iota^*[\tTheta]_n$ is an isomorphism and $[\tTheta]_n$ is surjective by our
induction hypothesis. Hence this implies that $[\tTheta]_n$ is surjective
and we are done.
\end{proof}

By Proposition \ref{Char_Matrix}, we can refine our diagram (\ref{Kernels})
to the following 
\begin{equation}
\label{Kernels2}
\xymatrix{
&  & 0\ar[d] & & 0\ar[d] & \\
& 0 \ar[r]\ar[d] & H^{n+1}G\ar[rr]_\sim^{\iota^*[\tTheta]_{n+1}}\ar[d] & &H^{n+1-m } V\ar[d]  & \\
0\ar[r] & K^{n+1}G\ar[r]\ar[d]_{u} & J^{n+1}G\ar[rr]^{[\tTheta]_{n+1}}\ar[d]_u & & J^{n+1-m}V\ar[r]\ar[d]_u & 0\\
0\ar[r] & K^nG\ar[r]\ar[d] & J^nG\ar[rr]^{[\tTheta]_{n}}\ar[d] &  & J^{n-m} V\ar[r]\ar[d]& 0\\
& \text{Coker}_1 & 0 & & 0 & \\
}
\end{equation}
where for all $n \geq m$, $K^nG := \mb{Ker} ([\tTheta]_n)$ .
\cblue
Note that for all $n$, $K^nG$ is independent of the choice of the basis 
$\bb{B}=\{\Theta_1, \dots ,\Theta_g\}$ of $\bXp(G)$.

\color{black}

\begin{theorem}\label{K_nG}
For all $n\geq m$, we have $$K^{n+1}G\simeq K^nG.$$
\end{theorem}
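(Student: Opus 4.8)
The plan is to apply the snake lemma to diagram (\ref{Kernels2}). The key observation is that the two right-hand columns of that diagram are short exact sequences of the canonical prolongation type: the column on $J^\bullet V$ is exact because $V\simeq\Ga^g$ is smooth and commutative, so $0\to H^{n+1-m}V\to J^{n+1-m}V\to J^{n-m}V\to 0$ is exact for all $n\ge m$ by the definition of $H^\bullet$ and the standard jet-space exact sequence; likewise the middle column $0\to H^{n+1}G\to J^{n+1}G\to J^nG\to 0$ is exact. By Proposition \ref{Char_Matrix}(1)--(2) the top horizontal arrow $\iota^*[\tTheta]_{n+1}:H^{n+1}G\to H^{n+1-m}V$ is an isomorphism, and by Proposition \ref{Char_Matrix}(3) the two lower horizontal arrows $[\tTheta]_{n+1}$ and $[\tTheta]_n$ are surjective. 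Defining $K^nG=\mathrm{Ker}([\tTheta]_n)$ for all $n\ge m$, we therefore have a commutative diagram with exact rows and columns to which the snake lemma applies.

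First I would set up the three-by-three diagram precisely: rows are $0\to K^{n+1}G\to J^{n+1}G\to J^{n+1-m}V\to 0$ and $0\to K^nG\to J^nG\to J^{n-m}V\to 0$, with the vertical $u$-maps between them, and a top row relating the kernels of the vertical maps, namely $0\to H^{n+1}G\to H^{n+1}G\to H^{n+1-m}V\to 0$ (the first map being the identity restricted appropriately, or more precisely the kernel of $u:K^{n+1}G\to K^nG$). The snake lemma then yields an exact sequence
\begin{equation}
\label{snake-seq}
0\to \ker(u|_{K^{n+1}G})\to H^{n+1}G\xrightarrow{\ \sim\ } H^{n+1-m}V\to \mathrm{coker}(u|_{K^{n+1}G})\to 0,
\end{equation}
where I have already used that the middle two connecting terms vanish because $u:J^{n+1}G\to J^nG$ and $u:J^{n+1-m}V\to J^{n-m}V$ are surjective with kernels $H^{n+1}G$ and $H^{n+1-m}V$ respectively, and the map between these kernels is the isomorphism $\iota^*[\tTheta]_{n+1}$. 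From (\ref{snake-seq}) and the fact that $\iota^*[\tTheta]_{n+1}$ is an isomorphism, both $\ker(u|_{K^{n+1}G})=0$ and $\mathrm{coker}(u|_{K^{n+1}G})=0$, so $u:K^{n+1}G\to K^nG$ is an isomorphism of group schemes over $S$.

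I expect the main obstacle to be purely bookkeeping: one must be careful that the map $K^{n+1}G\to K^nG$ really is the restriction of $u$ and that $H^{n+1}G$ (the kernel of $u$ on $J^{n+1}G$) actually lands inside $K^{n+1}G$ — this is exactly the content of the factorization $\iota^*[\tTheta]_{n+1}:H^{n+1}G\to H^{n+1-m}V$ combined with its being an isomorphism: an element of $H^{n+1}G$ maps into $K^{n+1}G$ iff its image under $[\tTheta]_{n+1}$ is zero, and since $[\tTheta]_{n+1}$ restricted to $H^{n+1}G$ is the isomorphism $\iota^*[\tTheta]_{n+1}$, no nonzero element of $H^{n+1}G$ lies in $K^{n+1}G$, which is precisely the statement $\ker(u|_{K^{n+1}G})=0$. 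Then surjectivity of $u|_{K^{n+1}G}$ follows by a diagram chase: given $x\in K^nG\subset J^nG$, lift it to $\tilde{x}\in J^{n+1}G$ by surjectivity of $u$; then $[\tTheta]_{n+1}(\tilde x)$ lies in $H^{n+1-m}V$ (since it dies in $J^{n-m}V$), and using that $\iota^*[\tTheta]_{n+1}$ is surjective onto $H^{n+1-m}V$ we can correct $\tilde x$ by an element of $H^{n+1}G$ to land in $K^{n+1}G$ while keeping $u(\tilde x)=x$. Finally I would remark that all maps involved are morphisms of group schemes over $S$, so the bijection is an isomorphism of group schemes, completing the proof; this last point is immediate since $u$ and $[\tTheta]_\bullet$ are group scheme morphisms.
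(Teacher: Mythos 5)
Your proposal is correct and follows essentially the same route as the paper: apply the snake lemma to diagram (\ref{Kernels2}), using that $\iota^*[\tTheta]_{n+1}:H^{n+1}G\to H^{n+1-m}V$ is an isomorphism (Proposition \ref{Char_Matrix}(2)) to conclude that $u|_{K^{n+1}G}$ has trivial kernel and cokernel. Your write-up is in fact more explicit than the paper's two-line proof, since you spell out both the injectivity and the surjectivity diagram chase rather than only noting $\mathrm{Coker}_1=\{0\}$.
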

\begin{proof}
By Proposition (\ref{Char_Matrix})(2), $\iota^*[\tTheta]_n$ is an isomorphism. 
Hence by snake lemma, this implies that Coker$_1=\{0\}$ and this proves our
result.
\end{proof}

Define
$J^\infty G:= \varprojlim_u J^nG$, $K^\infty G:= \varprojlim_u K^nG$ and
$J^\infty V:= \varprojlim_u J^nV\simeq \Ga^\infty$.
The above limits exist because the projection maps $u$ are affine morphisms
for all levels $n$.
Also passing to the limit in diagram (\ref{Kernels}) 
we have \begin{equation}
\label{prim_exact}
0\longrightarrow K^\infty G\rightarrow J^\infty G\xrightarrow{[\tTheta]_{\infty}}J^\infty V\rightarrow 0,
\end{equation}
where $[\tTheta]_{\infty}:J^\infty G\longrightarrow J^\infty V$ is a map of 
$D$-group schemes. Hence naturally, $K^\infty G=\Ker[\tTheta]_{\infty}$ obtains a $D$-group scheme structure.

\begin{corollary}
\label{Cor_1}
We have 
\begin{enumerate}
\item $K^\infty G\simeq K^mG.$
\item $\dim K^\infty G=\dim K^mG=mg.$
\end{enumerate}
In other words, we have $K(G) := K^mG$ to be a smooth commutative $D$-group 
scheme of relative dimension $mg$ over $S$.

\end{corollary}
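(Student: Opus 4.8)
The plan is to obtain everything by a short chase through diagrams and exact sequences already established — Theorem~\ref{K_nG}, the rows and columns of~(\ref{Kernels2}), and the limiting sequence~(\ref{prim_exact}) — together with the fact that over a field of characteristic zero every finite type group scheme is smooth. For assertion~(1): by Theorem~\ref{K_nG}, for every $n\ge m$ the transition map $u\colon K^{n+1}G\to K^nG$ is an isomorphism. Indeed the snake lemma argument proving that theorem shows that $\mathrm{Coker}_1=\mathrm{Coker}(u\colon K^{n+1}G\to K^nG)$ and $\Ker(u\colon K^{n+1}G\to K^nG)$ both vanish, since $\iota^*[\tTheta]_{n+1}\colon H^{n+1}G\to H^{n+1-m}V$ is an isomorphism by Proposition~\ref{Char_Matrix}(2). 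Hence the tower $K^mG\leftarrow K^{m+1}G\leftarrow\cdots$ whose inverse limit is $K^\infty G$ — the limit existing because each $u$ is an affine morphism, as already recorded — consists of isomorphisms, so the canonical projection $K^\infty G\to K^mG$ is an isomorphism of $S$-schemes; in particular $K^\infty G$ is of finite type and $\dim K^\infty G=\dim K^mG$.

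For assertion~(2): reading row $n$ of~(\ref{Kernels2}) horizontally gives, for each $n\ge m$, a short exact sequence of $S$-group schemes
$$0\longrightarrow K^nG\longrightarrow J^nG\xrightarrow{[\tTheta]_n}J^{n-m}V\longrightarrow 0,$$
surjectivity of $[\tTheta]_n$ being Proposition~\ref{Char_Matrix}(3). A surjective homomorphism of finite type group schemes over a field is faithfully flat, so relative dimensions add in this sequence. Since $G$ has relative dimension $g$ we have $\dim J^nG=g(n+1)$, and since $V\simeq\Ga^g$ we have $\dim J^{n-m}V=g(n-m+1)$; hence $\dim K^nG=g(n+1)-g(n-m+1)=mg$ for every $n\ge m$. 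Taking $n=m$ and combining with~(1) gives $\dim K^\infty G=\dim K^mG=mg$.

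For the concluding sentence: $K(G):=K^mG$ is a closed subgroup scheme of $J^mG$, hence of finite type over $S=\Spec K$; since $\mathrm{char}\,K=0$ it is automatically smooth (Cartier), and it is commutative because $J^mG$ is. For the $D$-group scheme structure, pass to the limit in~(\ref{Kernels}) to obtain~(\ref{prim_exact}), in which $[\tTheta]_\infty\colon J^\infty G\to J^\infty V$ is a morphism of $D$-group schemes; then $K^\infty G=\Ker[\tTheta]_\infty$ inherits a $D$-group scheme structure from $J^\infty G$. Concretely, in \'etale coordinates the ideal of $K^nG$ inside $J^nG$ is generated by $\{\partial^s\tTheta_j:0\le s\le n-m,\; 1\le j\le g\}$, and the prolongation derivation carries $\partial^s\tTheta_j$ to $\partial^{s+1}\tTheta_j$, so this ideal is $\partial$-stable and $\{K^nG\}_{n\ge m}$ forms a (shifted) prolongation sequence; transporting its $D$-structure along the isomorphism of~(1) — equivalently, using the derivation $\partial\colon\Ou_{K^mG}\to\Ou_{K^{m+1}G}\cong\Ou_{K^mG}$ — endows $K(G)=K^mG$ with a canonical $D$-group scheme structure of relative dimension $mg$ over $S$.

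The only step requiring genuine care, everything else being formal manipulation of diagrams already in hand, is the verification that the prolongation derivation preserves the defining ideal of the kernel, so that $K^\infty G$ is truly a $D$-subscheme of $J^\infty G$ and the isomorphism $K^\infty G\simeq K^mG$ carries a legitimate $D$-structure; this is exactly where the explicit coordinate description of $K^nG$ and the identity $\partial(\partial^s\tTheta_j)=\partial^{s+1}\tTheta_j$ are used.
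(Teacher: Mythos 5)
Your proposal is correct and follows essentially the same route as the paper: part (1) comes from Theorem \ref{K_nG} (transition maps in the tower are isomorphisms, so the limit is $K^mG$), and part (2) from the dimension count in the short exact sequence $0\to K^nG\to J^nG\to J^{n-m}V\to 0$ furnished by Proposition \ref{Char_Matrix}. Your extra care about the $D$-structure descending to $K^mG$ and about smoothness in characteristic zero is a sound elaboration of what the paper leaves implicit, not a different argument.
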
 
\begin{proof}
\begin{enumerate}
\item This is immediate from Theorem \ref{K_nG}.
\item Note that the morphism $$[\tTheta]_m:J^mG\longrightarrow V$$ is a surjection since $\iota^*[\tTheta]_m:H^mG\rightarrow V$ is an isomorphism of vector groups by Proposition \ref{Char_Matrix}(2). Hence the result follows since $\dim J^mG=(m+1)g$ and $\dim V=g.$ 
\end{enumerate}
\end{proof}

\begin{theorem}
We have $$0\longrightarrow K(G)\longrightarrow J^{\infty}G\xrightarrow{[\tTheta]_\infty} J^\infty V\longrightarrow 0. $$
\end{theorem}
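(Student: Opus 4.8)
The plan is to assemble the short exact sequence claimed in the final theorem from the results already established, essentially by passing to the inverse limit in diagram (\ref{Kernels2}) and invoking the stabilization results. First I would recall from Corollary \ref{Cor_1} that $K(G) = K^mG$ and that $K^nG \simeq K^mG$ for all $n \geq m$ by Theorem \ref{K_nG}; this is what lets us replace $K^\infty G$ by $K(G)$. Next I would observe that for each $n \geq m$ the middle row of (\ref{Kernels2}), namely
$$0 \longrightarrow K^nG \longrightarrow J^nG \xrightarrow{[\tTheta]_n} J^{n-m}V \longrightarrow 0,$$
is exact: left-exactness is the definition of $K^nG = \Ker([\tTheta]_n)$, and right-exactness is Proposition \ref{Char_Matrix}(3). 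These form a compatible inverse system over $n$ with respect to the transition maps $u$, so applying $\varprojlim_u$ gives the three-term sequence
$$0 \longrightarrow K^\infty G \longrightarrow J^\infty G \xrightarrow{[\tTheta]_\infty} J^\infty V \longrightarrow 0.$$
Finally, substituting $K^\infty G \simeq K(G)$ from Corollary \ref{Cor_1}(1) yields the stated sequence. That $[\tTheta]_\infty$ is a morphism of $D$-group schemes, and hence that $K(G)$ inherits a $D$-group scheme structure, is exactly the content of the discussion around (\ref{prim_exact}).

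The one genuine subtlety — and the step I expect to be the main obstacle — is the exactness of $\varprojlim_u$ on this particular inverse system of short exact sequences. In general $\varprojlim$ is only left-exact, so surjectivity of $[\tTheta]_\infty$ must be argued. Here the saving feature is that the transition maps $u$ in all three columns are affine (indeed, the kernel columns $H^nG$, $H^{n-m}V$, and by Theorem \ref{K_nG} the column $K^nG$, all stabilize or are vector groups with surjective transition maps), so the inverse system of kernels of $[\tTheta]_n$ satisfies the Mittag-Leffler condition; equivalently, one can argue directly on coordinate rings, where the limit is a filtered colimit of rings and exactness is preserved. I would phrase this by noting that passing to the limit in (\ref{Kernels}), as already asserted in the text right before (\ref{prim_exact}), is legitimate precisely because the projection maps are affine for all levels $n$, so the statement of the theorem is really just the combination of (\ref{prim_exact}) with the identification $K^\infty G \simeq K(G)$.

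Thus the proof is short: it is a matter of citing Corollary \ref{Cor_1}, Theorem \ref{K_nG}, and the exact sequence (\ref{prim_exact}), and then reading off that $K(G) = K^mG = K^\infty G$ fits into the displayed sequence with $[\tTheta]_\infty$ a $D$-group scheme morphism. I would keep the write-up to a couple of sentences, referencing these three prior results, rather than re-deriving the limit argument, since the affineness of the transition maps was already flagged in the text.
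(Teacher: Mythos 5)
Your proposal is correct and follows essentially the same route as the paper, which likewise deduces the theorem immediately from the exact sequence (\ref{prim_exact}) and the identification $K(G)=K^mG\simeq K^\infty G$ of Corollary \ref{Cor_1}. Your extra remarks justifying the surjectivity of $[\tTheta]_\infty$ after passing to the limit (affine transition maps, stabilization of the kernels via Theorem \ref{K_nG}) address a point the paper only asserts implicitly, but the argument is the same.
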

\begin{proof}
The result follows immediately from equation (\ref{prim_exact}) and 
Corollary \ref{Cor_1}.
\end{proof}

Consider the composition 
$$N^m G \stk{\iota}{\longrightarrow} J^m G 
\stk{[\tTheta]_m} {\longrightarrow} V$$
and define
$L(G) := \ker ([\tTheta]_m \circ \iota : N^mG \map V)$. Hence $\iota$ induces
a map, also denoted by $\iota : L(G) \map K(G)$ as shown in the dotted arrow
below
\begin{align}
\label{KG}
\xymatrix{
0 \ar[r] & L(G) \ar[r]\ar@{.>}[d]^\iota & N^{m}G\ar[rr]^-{[\tTheta]_{m}
\circ \iota} \ar[d]^-\iota & & V\ar[d]^{\mathbbm{1}} \ar[r] & 0 \\
0\ar[r] & K(G)\ar[r] & J^{m}G\ar[rr]^-{[\tTheta]_{m}} & & 
V\ar[r] & 0.\\
} \end{align}

Note that the map $[\tTheta]_m \circ \iota : N^m G \map V$ is a surjection
because of Proposition \ref{Char_Matrix} (2).

\begin{theorem}
\label{K(G)-2}
The group scheme $K(G)$ is a finite dimensional $D$-group scheme 
which is a vectorial extension of $G$ as follows
$$
0 \longrightarrow L(G) \stk{\iota}{\longrightarrow} K(G) 
\longrightarrow G \longrightarrow 0
$$
where $L(G) := \ker (\iota^*[\tTheta]_m: N^mG \map V)$ is a vector group of 
relative dimension $(m-1)g$ over $S$.
\end{theorem}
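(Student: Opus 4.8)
The plan is to extract the desired short exact sequence $0 \to L(G) \to K(G) \to G \to 0$ from the commutative ladder \eqref{KG} together with the structural facts already established in this section. First I would record the bottom row of \eqref{KG}: by definition $K(G) = K^mG = \ker([\tTheta]_m : J^mG \to V)$, and by Proposition \ref{Char_Matrix}(3) the map $[\tTheta]_m$ is surjective, so $0 \to K(G) \to J^mG \xrightarrow{[\tTheta]_m} V \to 0$ is exact; restricting the projection $u : J^mG \to G$ to $K(G)$ gives a map $K(G) \to G$. To see this restricted map is surjective with the expected kernel, I would invoke the snake-lemma-style diagram chase on
$$
\xymatrix{
0 \ar[r] & N^mG \ar[r]^\iota \ar[d]_{[\tTheta]_m \circ \iota} & J^mG \ar[r]^u \ar[d]_{[\tTheta]_m} & G \ar[r] \ar[d] & 0 \\
0 \ar[r] & V \ar[r]^{\mathbbm{1}} & V \ar[r] & 0 \ar[r] & 0,
}
$$
whose rows are the defining exact sequence of $N^mG$ and the split sequence for the identity on $V$. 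Since $[\tTheta]_m \circ \iota : N^mG \to V$ is surjective (this is exactly Proposition \ref{Char_Matrix}(2)), the snake lemma yields an exact sequence $0 \to \ker([\tTheta]_m\circ\iota) \to \ker([\tTheta]_m) \to \ker(u) \to 0$, i.e. precisely $0 \to L(G) \to K(G) \to G \to 0$, the induced map $L(G) \to K(G)$ being the dotted arrow $\iota$ of \eqref{KG}.

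Next I would identify $L(G)$ as a vector group and compute its dimension. By the coordinate description recalled before Proposition \ref{Nn}, $N^mG \simeq \Ga^{mg}$ with coordinates $\Del(\bx),\dots,\Del^m(\bx)$, and the surjection $[\tTheta]_m\circ\iota : N^mG \to V \simeq \Ga^g$ is a $K$-linear map of vector groups (its leading part on $H^mG$ is the invertible matrix $A$ of Lemma \ref{prim_mat}). A surjective homomorphism of vector groups has kernel again a vector group, of relative dimension $mg - g = (m-1)g$ over $S$. This gives the vectorial extension claim: $L(G) = \ker(\iota^*[\tTheta]_m : N^mG \to V)$ is a vector group of relative dimension $(m-1)g$.

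Finally, for the $D$-group scheme assertion I would argue as in Corollary \ref{Cor_1}: $K(G) = K^mG \simeq K^\infty G$ by Theorem \ref{K_nG}, and $K^\infty G = \ker([\tTheta]_\infty : J^\infty G \to J^\infty V)$ inherits the $D$-group scheme structure by restricting the canonical derivation on $J^\infty G$, since $[\tTheta]_\infty$ is a morphism of $D$-group schemes (equation \eqref{prim_exact}); transporting this structure back along the isomorphism $K^\infty G \simeq K^mG$ equips $K(G)$ with a $D$-group scheme structure, and finite dimensionality is $\dim K(G) = mg$ from Corollary \ref{Cor_1}. The commutativity and smoothness of $K(G)$ follow from those of $J^mG$ since $K(G)$ is the kernel of a homomorphism to the smooth group $V$. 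I expect the only real subtlety to be the bookkeeping ensuring that the two occurrences of $V$ in \eqref{KG} are matched by the identity (so that the snake connecting map is zero and the sequence for $K(G)$ splits off cleanly); everything else is a direct appeal to Proposition \ref{Char_Matrix} and the coordinate computations already in hand.
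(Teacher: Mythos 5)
Your proof is correct and takes essentially the same route as the paper: the paper applies the snake lemma to the two horizontal short exact sequences $0 \to L(G) \to N^mG \to V \to 0$ and $0 \to K(G) \to J^mG \to V \to 0$ with vertical maps $\iota$, $\iota$, $\mathbbm{1}$ (reading off $\ker \simeq 0$ and $\mathrm{coker} \simeq G$ down the first column), which is just the transposed view of the same $3\times 3$ diagram you chase. Your dimension count for $L(G)$ and the appeal to Theorem \ref{K_nG} and Corollary \ref{Cor_1} for the $D$-group scheme structure match the paper's treatment as well.
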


\begin{proof}
Consider the following diagram of group schemes which is an extension of
the diagram (\ref{KG})
\begin{equation}
\label{D-module}
\xymatrix{
& \mb{ker} \ar[d] & 0\ar[d] & & 0\ar[d] & \\
0 \ar[r] & L(G) \ar[r]\ar[d]^\iota & N^{m}G\ar[rr]^-{\iota^*[\tTheta]_{m}}
\ar[d]^-\iota & & V\ar[d]^{\mathbbm{1}} \ar[r] & 0 \\
0\ar[r] & K(G)\ar[r]\ar[d] & J^{m}G\ar[rr]^-{[\tTheta]_{m}}\ar[d]_u & & 
V\ar[r]\ar[d] & 0\\
& \mb{coker}  & G  &  & 0. & \\
}
\end{equation}

Then  $L(G) \simeq \Ga^{(m-1)g}$ since $[\tTheta]_m \circ \iota : N^mG 
\map V$ is a surjection. 
And by snake lemma we obtain that  $\mb{ker} \simeq \{0\}$ and 
$\mb{coker} \simeq  G$. Hence the result follows considering the first column
of the above diagram.
\end{proof}

\subsection{Concluding remarks}
We now make a few remarks about the relation of $K(A)$ and the universal
vectorial extension of $A$ when $A$ is an abelian 
scheme over $S$. Consider the universal vectorial
extension $E(A)$ satisfying 
\begin{align}
\label{univ-extn}
0 \map H^0(A,\Omega_A) \map E(A) \map A \map 0,
\end{align}
as in equation $(5.4)$ in \cite{Bost}. By the universal property of 
(\ref{univ-extn}), there exists a unique map $f: H^0(A,\Omega_A) \map L(A)$ 
of vector groups such that our exact sequence in Theorem \ref{K(G)} is a 
push out of (\ref{univ-extn}) by $f$ as follows
\begin{align}
\xymatrix{
0 \ar[r] & H^0(A,\Omega_A) \ar[d]^f \ar[r] & E(A) \ar[d]^-\Psi \ar[r] & A 
\ar@{=}[d] \ar[r] & 0 \\
0 \ar[r] & L(A) \ar[r] & K(A) \ar[r] & A \ar[r] & 0.
}
\end{align}
Note that $E(A)$ is also naturally a $D$-group scheme. Therefore it appears
pertinent, at the moment, to conjecture that the map $\Psi$ is a map of 
$D$-group schemes for abelian schemes $A$. This is a deep question and
we relegate further investigation of it to future work.

\textbf{Acknowledgments.} 
The authors are grateful to the
anonymous referee for offering  perceptive suggestions from which
this paper has greatly benefitted.
The first author would like to thank CSIR for  financial support under the scheme 09/1031(0011)/2021-EMR-I.
The second author would like to thank James Borger for many insightful 
discussions. The second author was partially supported by the SERB grant
SRG/2020/002248. The authors also wish to thank Sudip Pandit for helpful remarks.

\end{document}